\DeclareMathOperator{\tr}{tr}
\DeclareMathOperator{\dsc}{dsc}
\DeclareMathOperator{\diam}{diam}
\DeclareMathOperator{\sgn}{sgn}
\newcommand{\mcp}{\mathcal P}
\newcommand{\NN}{\mathbb{N}}
\newcommand{\RR}{\mathbb{R}}
\begin{document}

\title{On a weighted generalization of Kendall's tau distance \thanks{The second author was supported by H2020-MSCA-RISE-2014, Project number 645672 (AMMODIT: ``Approximation Methods for Molecular Modelling and Diagnosis Tools'').}
}
%\subtitle{Do you have a subtitle?\\ If so, write it here}

\titlerunning{On a weighted generalization of Kendall's tau distance}        % if too long for running head

\author{Albert Bruno Piek         \and
        Evgeniy Petrov %etc.
}

%\authorrunning{Short form of author list} % if too long for running head

\institute{Albert Bruno Piek \at
              Institute of Mathematics, University of L\"{u}beck, Ratzeburger Allee 160, 23562 L\"{u}beck,  Germany  \\
              %Tel.: +123-45-678910\\
              %Fax: +123-45-678910\\
              \email{piek@math.uni-luebeck.de}           %  \\
%             \emph{Present address:} of F. Author  %  if needed
           \and
           Evgeniy Petrov \at
              Institute of Applied Mathematics and Mechanics of the NAS of Ukraine,  Dobrovolskogo Str.~1, 84100 Slovyansk, Ukraine \\
              \email{eugeniy.petrov@gmail.com}
}

%\date{Received: date / Accepted: date}
% The correct dates will be entered by the editor

\maketitle

\begin{abstract}
We introduce a metric on the set of permutations of given order,  which is a weighted generalization of Kendall's $\tau$ rank distance and study its properties. Using the edge graph of a permutohedron, we give a criterion which guarantees that a permutation lies metrically between another two fixed permutations. In addition, the conditions under which four points from the resulting metric space form a pseudolinear quadruple were found.
\keywords{Kendall's tau distance \and Metric space \and Permutation \and Permutohedron}
% \PACS{PACS code1 \and PACS code2 \and more}
 \subclass{54E35 \and 05A05 \and 05C35}
\end{abstract}

\section{Introduction}

The concept of metric spaces appeared more than one century ago in works of Maurice Fr\'{e}chet and Felix Hausdorff. Recall that a {metric} on a set $X$ is a function $d\colon X\times X\to \mathbb{R}_+$, $\mathbb{R}_+=[0,\infty)$, such that the following conditions hold:

  (i) $d(x,y)=d(y,x)$ (symmetry),

  (ii) $(d(x,y)=0)\Leftrightarrow (x=y)$ (identity of indiscernibles),

  (iii) $d(x,y)\leqslant d(x,z)+d(z,y)$ (triangle inequality),

\noindent for all $x, y, z \in X$. The pair $(X,d)$ is called a {metric space}.

It is possible to define metrics not only on the ``abstract'' sets of points but also on the sets of various mathematical objects. The well-known Encyclopedia of Distances~\cite{DD16} contains a large number of distances including metrics on such objects as graphs, matrices, strings, permutations, etc. These distances are not only interesting to be studied from a theoretical viewpoint, but have also importance in applications.

Many real life settings require comparisons between pairs of objects that cannot be described simply by the Euclidean metric. As an example, the question how to compare different rankings arose in many psychological works, where two or more different observers had the task to rank a set of objects for certain properties. The search for a quantification of the similarity of rankings led to the famous Kendall's rank correlation coefficient introduced in~\cite{K38}. The Kendall $\tau$ distance arises naturally from the rank correlation coefficient and counts the number of pairwise disagreements between two permutations. It is an example of a metric defined on the set $S_n$ of all permutations or, equivalently, ranking lists of order $n$, see~(\ref{e3}). A historical review of Kendall's $\tau$ and related coefficients can be found in~\cite{K58}.

The first known to authors weighted generalization of the Kendall $\tau$ metric was introduced in~\cite{LH06,LZH08} as follows:
\begin{equation}\label{e1}
K_w(\pi,\varphi)=\sum\limits_{1\leqslant i<j\leqslant n}w_iw_j\mathbf{1}_{(\pi_j-\pi_i)(\varphi_j-\varphi_i)<0},
\end{equation}
where $w_iw_j>0$, and $\pi,\varphi \in S_n$.

This generalization was considered in order to ``accommodate'' the fact that not all predicates are equally important. The metric $K_w$ is equal to the standard Kendall $\tau$ distance if $w_i=1$ for all $i\in \{1,..,n\}$. Without any reference to~\cite{LH06}, apparently independently, similar generalizations appeared in~\cite{KV10}  and~\cite{LY10,LY12}. In the latter case authors were motivated from the weighted Kendall's $\tau$ correlation coefficient proposed in~\cite{S98}. In the literature there exist diverse generalizations and modifications of Kendall's $\tau$ distance. It is worth to mention a generalization on partial orders~\cite{BGH13}, generalizations in terms of transpositions~\cite{CV14,BE14}, the so-called probabilistic Kendall's tau distance~\cite[p.~125]{VBNK17}, a weighted Kendall's distance \cite{FTM12}, and Kendall's tau sequence distance~\cite{C19}.

Aside from modifications of Kendall's $\tau$, several other noteworthy distance and similarity measures for permutations, respectively rankings, emerged from applications. Another well-known proximity measure for rankings is Spearman's $\varrho$ rank coefficient, introduced in \cite{S04}. More recent specialized approaches can be found, e.g., in the application fields of medical diagnostics~\cite{KW04}, hydrology \cite{FSS17}, physiology \cite{WSSC16}, and neurophysiology \cite{ODRL2010}. An overview over metrics on $S_n$ is given in \cite{DH98}.

In this paper, using a matrix of weights $W$, we consider a more general than~(\ref{e1}), but for convenience normalized, distance $d_W$: instead of multiplications of weights $w_iw_j$ we use the weights $w_{i,j}$ directly assigned to the pair of coordinates $(i,j)$, see~(\ref{e11}), and study  geometric properties of the space $(S_n,d_W)$. Such generalization is interesting from the theoretical point of view and is more flexible for possible applications.

In Section 2 we prove that $(S_n,d_W)$ in general case is a pseudometric space and is a metric space if and only if all weights are positive, which is stated in Theorem~\ref{thm:metric}. Furthermore, we describe several properties of this space and its metric.

The main result of Section 3 is presented in  Theorem~\ref{th3.4}, where we provide a criterion when a point from $(S_n,d_W)$ ``lies between'' another two fixed points from $(S_n,d_W)$. It is formulated using an edge-graph of a permutohedron of order $n$.

In Section 4 we investigate the occurrence of special type four-point subsets of the space $(S_n,d_W)$ -- so called  ``pseudolinear quadruples''.

At the end of the paper we formulate a conjecture that suggests a characterization of the metric space $(S_n,d_W)$ by certain geometric properties in a sense that all metric spaces $(X,d)$, $|X|=n!$, with these properties are isometric to $(S_n,d_W)$ for some weight $W$. We prove it for the case $n=3$.

\section{Definitions and basic properties}

In this section we introduce a weighted generalization of Kendall's tau distance and study some of its basic properties.

Denote by $S_n$ the set of all permutations of the numbers $1,...,n$.
For the permutations $\pi=(\pi_1,\dots,\pi_n)$, $\varphi=(\varphi_1,\dots,\varphi_n)\in S_n$ define the {discordance indicator} by
$$
\dsc_{i,j}(\pi,\varphi)=\mathbf{1}_{(\pi_j-\pi_i)(\varphi_j-\varphi_i)<0}.
$$
Clearly, $\dsc_{i,j}(\pi,\varphi)$ is symmetric with respect to $i, j$ and with respect to $\pi, \varphi$. Define the discordance set of $\pi$ and $\varphi$ to be
$$
\dsc(\pi,\varphi)=\{(i,j) \, | \, i<j, \, \dsc_{i,j}(\pi,\varphi)=1\}.
$$

\begin{remark}\label{rem1}
In the special case where $\varphi$ is the identity permutation $id$, $\dsc(\pi,id)$ describes the set of inversions, i.e., all index pairs $i<j$ with $\pi_i>\pi_j$. Typically, the notation $I(\pi)$ is chosen for the inversion set, the cardinality $\left\vert I(\pi)\right\vert$ of the inversion set is called the inversion number of the permutation $\pi$ and is a well known property of permutations that measures the sortedness. It is related to the sign of a permutation and was first introduced and used in the Cramer's rule for determinants.
\end{remark}

Let $W=(w_{i,j})\in\RR_{+}^{n\times n}$ be a {strictly upper triangular} weighting matrix with $w_{i,j}\geqslant 0$.
Define a map $d_W\colon S_n\times S_n \to [0,1]$ as follows:
\begin{equation}\label{e11}
d_W(\pi,\varphi)=\frac{\sum_{i,j=1}^n \dsc_{i,j}(\pi,\varphi)\cdot w_{i,j}}{\sum_{i,j=1}^n w_{i,j}}.
\end{equation}
Everywhere below we consider that $\sum_{i,j=1}^n w_{i,j}\neq 0$.
The distance~(\ref{e11}) generalizes the normalized Kendall $\tau$ ranking distance, which is defined as
\begin{equation}\label{e3}
K(\pi,\varphi) = \frac{\vert\dsc(\pi,\varphi)\vert}{n(n-1)/2}.
\end{equation}
This distance coincidences with $d_W(\pi,\varphi)$ if $W=W^{\tau}=(w_{i,j})$, where $w_{i,j}=1$, $i<j$. Thereby, $d_{W^{\tau}}$ is also related to Kendall's $\tau$ correlation coefficient $\tau(\pi,\varphi)$~\cite{K38} by $\tau(\pi,\varphi)=1-2 d_{W^{\tau}}$.

Recall that a {pseudometric space} is a generalization of a metric space in which the distance between two distinct points can be zero, i.e., instead of axiom (ii) in the definition of metric spaces we have the condition $d(x,x)=0$. In this case $d$ is called a {pseudometric}.

\begin{theorem}\label{thm:metric}
The pair $(S_n,d_W)$ is a pseudometric space, $n\geqslant 2$. Moreover, $d_W$ is a metric if and only if all the weights $w_{i,j}$ are positive for $i<j$.
\end{theorem}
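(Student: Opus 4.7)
The plan is to verify the four pseudometric axioms directly from~(\ref{e11}), and then treat the metric characterization as a separate ``if and only if'' statement by providing a construction of two distinct permutations whose only discordance lies on a prescribed pair.

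For the pseudometric part, non-negativity, the vanishing $d_W(\pi,\pi)=0$, and symmetry are immediate from the definition together with the fact that $\dsc_{i,j}(\pi,\varphi)$ is symmetric in its two permutation arguments. The essential step is the triangle inequality, and I would reduce it to the pointwise inequality
\begin{equation*}
\dsc_{i,j}(\pi,\varphi)\leqslant \dsc_{i,j}(\pi,\sigma)+\dsc_{i,j}(\sigma,\varphi)
\qquad (i<j,\ \pi,\varphi,\sigma\in S_n).
\end{equation*}
This is trivial if the left-hand side is $0$, so assume it equals $1$. Then $\pi_i-\pi_j$ and $\varphi_i-\varphi_j$ have opposite signs, and since $\sigma_i\ne\sigma_j$, the number $\sigma_i-\sigma_j$ must disagree in sign with at least one of them, forcing the right-hand side to be $\geqslant 1$. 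Multiplying by $w_{i,j}\geqslant 0$, summing, and dividing by $\sum w_{i,j}$ then gives the triangle inequality for $d_W$.

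For the metric characterization, the ``if'' direction is short: if every $w_{i,j}>0$ for $i<j$, then $d_W(\pi,\varphi)=0$ forces $\dsc_{i,j}(\pi,\varphi)=0$ for every $i<j$, i.e.~$\pi_i$ and $\pi_j$ are in the same relative order as $\varphi_i$ and $\varphi_j$ for each pair, hence $\pi=\varphi$. The harder ``only if'' direction is where the main work lies: given any pair $(i_0,j_0)$ with $i_0<j_0$ and $w_{i_0,j_0}=0$, I must exhibit two distinct permutations $\pi,\varphi\in S_n$ whose \emph{only} discordance is at $(i_0,j_0)$; then $d_W(\pi,\varphi)=w_{i_0,j_0}/\sum w_{i,j}=0$, contradicting the identity of indiscernibles.

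The construction I have in mind is to place the two smallest values $1$ and $2$ in the two distinguished positions: set $\pi_{i_0}=1$, $\pi_{j_0}=2$, and fill the remaining $n-2$ positions with $3,4,\ldots,n$ in any order; let $\varphi$ be obtained from $\pi$ by swapping the entries at positions $i_0$ and $j_0$. Then for the pair $(i_0,j_0)$ the relative order is reversed, so $\dsc_{i_0,j_0}(\pi,\varphi)=1$. For any other pair $(i,j)$ with $i<j$, if $\{i,j\}\cap\{i_0,j_0\}=\emptyset$ the two permutations coincide at both coordinates; if exactly one of $i,j$ equals $i_0$ or $j_0$, then the other coordinate carries a value in $\{3,\ldots,n\}$, which exceeds both $1$ and $2$, so the relative order is preserved under the swap. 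Thus $\dsc(\pi,\varphi)=\{(i_0,j_0)\}$, completing the proof. The only subtle step is verifying that the single swap cannot create a ``collateral'' discordance with an intermediate position, and this is exactly what is ensured by choosing the two \emph{smallest} (equivalently, consecutive) values at the two distinguished positions.
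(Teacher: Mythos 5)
Your proof is correct. The pseudometric part --- in particular the reduction of the triangle inequality to the pointwise bound $\dsc_{i,j}(\pi,\varphi)\leqslant \dsc_{i,j}(\pi,\sigma)+\dsc_{i,j}(\sigma,\varphi)$ --- is essentially the paper's argument (the paper phrases it as the observation that $\dsc_{i,j}(\pi,\varphi)+\dsc_{i,j}(\varphi,\psi)-\dsc_{i,j}(\pi,\psi)$ takes values only in $\{0,2\}$, which rests on the same sign analysis), and your ``if'' direction of the metric characterization coincides with the paper's. Where you genuinely differ is the ``only if'' direction: the paper does not prove it inside the theorem at all, but defers it to a subsequent remark asserting that a zero-distance pair of distinct permutations ``can be obtained by swapping the $i$th and $j$th element of an arbitrary permutation.'' Taken literally that is not sufficient: swapping the entries at positions $i_0$ and $j_0$ of an arbitrary $\pi$ creates a collateral discordance at every pair $(i_0,k)$ or $(k,j_0)$ whose entry $\pi_k$ lies strictly between $\pi_{i_0}$ and $\pi_{j_0}$, and those pairs carry positive weight, so the resulting distance need not be zero. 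Your construction --- forcing $\{\pi_{i_0},\pi_{j_0}\}=\{1,2\}$ so that no third value can lie between the two swapped ones --- is exactly the repair needed, and you correctly flag this as the one subtle point. In short, your write-up follows the paper's route for the pseudometric axioms and the ``if'' direction, and supplies a more careful, fully correct argument for the ``only if'' direction than the paper itself records.
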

\begin{proof}
Symmetry can be seen immediately. Zero distance $d_W(\pi,\pi)=0$ for equal permutations follows from the equality $\dsc_{i,j}(\pi,\pi)=\mathbf{1}_{(\pi_j-\pi_i)^2<0}=0$. Let $\pi, \varphi, \psi \in S_n$. To prove the triangle inequality consider the difference
\begin{equation*}
\Delta=d_W(\pi,\varphi)+d_W(\varphi,\psi) - d_W(\pi,\psi).
\end{equation*}
In order to fulfill the triangle inequality, this difference has to be nonnegative. One can see that $\Delta$ can be written as

\begin{equation}\label{eq1}
\Delta=\frac{\sum_{i,j=1}^n \left(\dsc_{i,j}(\pi,\varphi) +\dsc_{i,j}(\varphi,\psi)-\dsc_{i,j}(\pi,\psi)\right)w_{i,j}}{\sum_{i,j=1}^n w_{i,j}} \geq 0.
\end{equation}
The summands $\dsc_{i,j}(\pi,\varphi)+\dsc_{i,j}(\varphi,\psi)-\dsc_{i,j}(\pi,\psi)$ in the numerator only takes values in $\lbrace 0,2\rbrace$ since
\begin{align*}
(\dsc_{i,j}(\pi,\varphi)=\dsc_{i,j}(\varphi,\psi)) &\Rightarrow (\dsc_{i,j}(\pi,\psi)=0), \\
(\dsc_{i,j}(\pi,\varphi)\neq\dsc_{i,j}(\varphi,\psi)) &\Rightarrow (\dsc_{i,j}(\pi,\psi)=1).
\end{align*}
Hence and because of the fact that all weights $w_{i,j}$ are nonnegative, inequality~(\ref{eq1}) evidently holds. Thereby the triangle inequality holds for $d_W$ making it a pseudometric.

Consider now only positive weights $w_{i,j}$ for $i<j$. Then
\begin{equation*}
d_W(\pi,\varphi)=0 \Leftrightarrow \dsc_{i,j}(\pi,\varphi)=0 \, \text{ for all } \, 1\leqslant i<j \leqslant n.
\end{equation*}
Since both permutations are thereby concordant for every index pair $i,j$, they must be equal and $d_W$ satisfies the identity of indiscernibles.
\end{proof}
\begin{remark}
The requirement of positive weights in~(\ref{e11}) is sharp in the sense that if $w_{i,j}=0$ for at least one pair $(i,j)$ one can always find pairs of permutations with distance zero. These can be obtained by swapping the $i$th and $j$th element of an arbitrary permutation.
\end{remark}

\begin{remark}
In the case $n=1$ we have $S_1=\{(1)\}$ and $(S_1,d_W)$ is a trivial one-point metric space.
\end{remark}

Let us define the following permutation for given $\pi=(\pi_1,\dots,\pi_n)$:
\begin{align*}
\hat{\pi}&=(n+1-\pi_1,\dots,n+1-\pi_n) \text{\quad({ordinal inverse})}.
\end{align*}

The next proposition describes some basic geometric properties of the space $(S_n,d_W)$.

\begin{proposition}\label{p22}
The following conditions hold for every pseudometric space $(S_n,d_W)$, $n\geqslant 2$, and for all $\pi, \varphi \in S_n$:
\begin{itemize}
\item[\emph{(i)}] The pseudometric $d_W$ is scaling invariant with respect to $W$, i.e., the equality  $$d_{aW}(\pi,\varphi)=d_{W}(\pi,\varphi)$$ holds  for every  $a>0$, and for every $W\in\RR^{n\times n}_{+}$.

\item[\emph{(ii)}] The pseudometric  $d_W$ is subadditive with respect to $W$, i.e., the inequality
$$d_{W+V}(\pi,\varphi)\leqslant d_{W}(\pi,\varphi)+d_{V}(\pi,\varphi)$$
holds for all  $W, V\in\RR^{n\times n}_+$.

\item[\emph{(iii)}] The equality $d_W(\pi,\hat{\varphi})=1-d_W(\pi,\varphi)$ holds.

\item[\emph{(iv)}] The equality $d_W(\pi,\varphi)=d_W(\hat{\pi},\hat{\varphi})$ holds.

\item[\emph{(v)}] The equality $d_W(\pi,\varphi)\leqslant 1$ holds. Moreover, if $d_W$ is a metric, then $d_W(\pi,\varphi)=1$ if and only if $\varphi=\hat{\pi}$.
\end{itemize}
\end{proposition}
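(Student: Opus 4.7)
My plan is to dispatch each of (i)--(v) by direct computation from the definition~(\ref{e11}), exploiting the single algebraic observation that the ordinal inverse satisfies $\hat{\pi}_j - \hat{\pi}_i = -(\pi_j - \pi_i)$ for all $i,j$, together with the fact that differences $\pi_j - \pi_i$ never vanish since the entries of a permutation are distinct.

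I would handle (i) and (iv) first as one-line observations. For (i), the scalar $a > 0$ factors out of both the numerator and denominator of $d_{aW}$. For (iv), the product $(\pi_j - \pi_i)(\varphi_j - \varphi_i)$ picks up two sign flips on passing to $(\hat{\pi}, \hat{\varphi})$, so its sign is unchanged and $\dsc_{i,j}(\hat{\pi}, \hat{\varphi}) = \dsc_{i,j}(\pi, \varphi)$ pairwise. For (iii), by contrast, a single sign flip occurs, so the product has opposite (nonzero) sign and $\dsc_{i,j}(\pi, \hat{\varphi}) + \dsc_{i,j}(\pi, \varphi) = 1$ for every $i < j$; summing weighted by $w_{i,j}/\sum w_{i,j}$ yields $d_W(\pi, \hat{\varphi}) = 1 - d_W(\pi, \varphi)$.

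The only part that needs more than a one-line observation is (ii), which I expect to be the main obstacle. Writing $S_W = \sum_{i,j} w_{i,j}$, $N_W = \sum_{i,j} \dsc_{i,j}(\pi, \varphi) w_{i,j}$, and analogously $S_V, N_V$, the claim reduces to the mediant-type bound
\begin{equation*}
\frac{N_W + N_V}{S_W + S_V} \leqslant \frac{N_W}{S_W} + \frac{N_V}{S_V}.
\end{equation*}
After clearing the (positive) denominators this simplifies to $0 \leqslant N_W S_V^2 + N_V S_W^2$, which holds trivially since every quantity is nonnegative. Equivalently one invokes the standard fact $(a+c)/(b+d) \leqslant \max(a/b, c/d)$ for nonnegative $a, c$ and positive $b, d$.

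Finally, for (v), the bound $d_W \leqslant 1$ is immediate from $\dsc_{i,j} \in \{0, 1\}$. Setting $\varphi = \pi$ in (iii) gives $d_W(\pi, \hat{\pi}) = 1 - d_W(\pi,\pi) = 1$, which settles the ``if'' direction. Conversely, if $d_W$ is a metric (so $w_{i,j} > 0$ for every $i < j$) and $d_W(\pi, \varphi) = 1$, then positivity of the weights forces $\dsc_{i,j}(\pi, \varphi) = 1$ on every pair; hence $\pi$ and $\varphi$ induce opposite total orders on $\{1, \dots, n\}$, which uniquely determines $\varphi = \hat{\pi}$.
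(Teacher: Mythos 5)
Your proof is correct and follows essentially the same route as the paper: parts (iii)--(v) rest on the same pairwise sign-flip computation $\dsc_{i,j}(\pi,\hat{\varphi})=1-\dsc_{i,j}(\pi,\varphi)$ (the paper derives (iv) from (iii) rather than by your direct double-sign-flip observation, and proves the ``only if'' of (v) via a chain of equivalences rather than your direct combinatorial argument, but these are cosmetic differences). The one place you go beyond the paper is (ii), which it leaves to the reader; your reduction, after clearing the positive denominators, to $0\leqslant N_W S_V^2+N_V S_W^2$ is correct, as is the alternative via the mediant inequality.
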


\begin{proof}
The proofs of the statements (i) and (ii) are straightforward and left to the reader.

(iii) For each index pair $i<j$ it follows from the definition of the ordinal inverse that
\begin{equation*} \sgn(\hat{\varphi}_j-\hat{\varphi}_i)=\sgn(n+1-\varphi_j-n-1+\varphi_i)=\sgn(\varphi_i-\varphi_j)=-\sgn(\varphi_j-\varphi_i).
\end{equation*}
In consequence, $\dsc_{i,j}(\pi,\hat{\varphi})=\mathbf{1}_{(\pi_j-\pi_i)(\varphi_j-{\varphi}_i)>0}=1-\dsc_{i,j}(\pi,\varphi)$.
Since this statement holds for every index pair $i<j$, the result remains when the weighted and normalized sum over all pairs is considered, leading to (iii).

(iv) Using statement (iii) we have $d_W(\hat{\pi},\hat{\varphi})=1-d_W(\hat{\pi},\varphi)=1-d_W(\varphi,\hat{\pi})=1-(1-d_W(\varphi,\pi))=d_W(\pi,\varphi)$.

(v) The inequality $d_W(\pi,\varphi)\leqslant 1$ follows directly from~(\ref{e11}). Let $d_W$ be a metric and let $\varphi=\hat{\pi}$. Using conditions (iv) and (iii) for the proof consider the sequence of equivalences: $\varphi=\hat{\pi}$ iff $d_W(\hat{\pi},\varphi)=0$ iff $d_W(\pi,\hat{\varphi})=0$ iff $1-d_W(\pi,\varphi)=0$ iff  $d_W(\pi,\varphi)=1$.
\end{proof}

\section{Betweenness of points in $(S_n,d_W)$}

In this section we continue to study geometric properties of the space $(S_n,d_W)$ by characterizing triplets of points from $S_n$ which satisfy the ternary relation ``to lie between''. This relation is intuitive for points belonging to some straight line, plane or three-dimensional space. K. Menger~\cite[p.~77]{Me28} seems to be the first who formulated the concept of ``metric betweenness'' for general metric spaces. Let $(X, d)$ be a metric space, and let $x, y$ and $z$ be different points from $X$. The point $y$ {lies between} $x$ and $z$, if $d(x, z) = d(x, y) + d(y, z)$. This concept is used also in the present time for the study of metric spaces, see, e.g.,~\cite{ACH16}.

Recall that an {undirected graph} is a pair $(V,E)$ consisting of a nonempty set $V$ and a (probably empty) set $E$ whose elements are unordered pairs of different points from $V$. For a graph $G=(V,E)$, the sets $V=V(G)$ and $E=E(G)$ are called {the set of vertices} and {the set of edges}, respectively. A {path} in a graph $G$ is a subgraph $P$ of $G$ for which
$$
V(P)=\{x_0,...,x_k\}, \quad E(P) =\{\{x_0,x_1\},...,\{x_{k-1},x_k\}\},
$$
where all $x_i$ are distinct. Sometimes for convenience we refer to a path by the natural sequence of its vertices, say, $P=\{x_0,...,x_k\}$.
A finite graph $C$ is a cycle if $|V(C)| \geq 3$ and there exists an enumeration $(v_1, \ldots, v_n)$ of its vertices such that
$$
(\{v_i, v_j\} \in E(C)) \Leftrightarrow (|i-j|=1 \text{ or } |i-j|=n-1).
$$
A  cycle is {simple} if no repetitions of vertices and edges allowed.

\begin{figure}[ht]
\includegraphics[width=\textwidth]{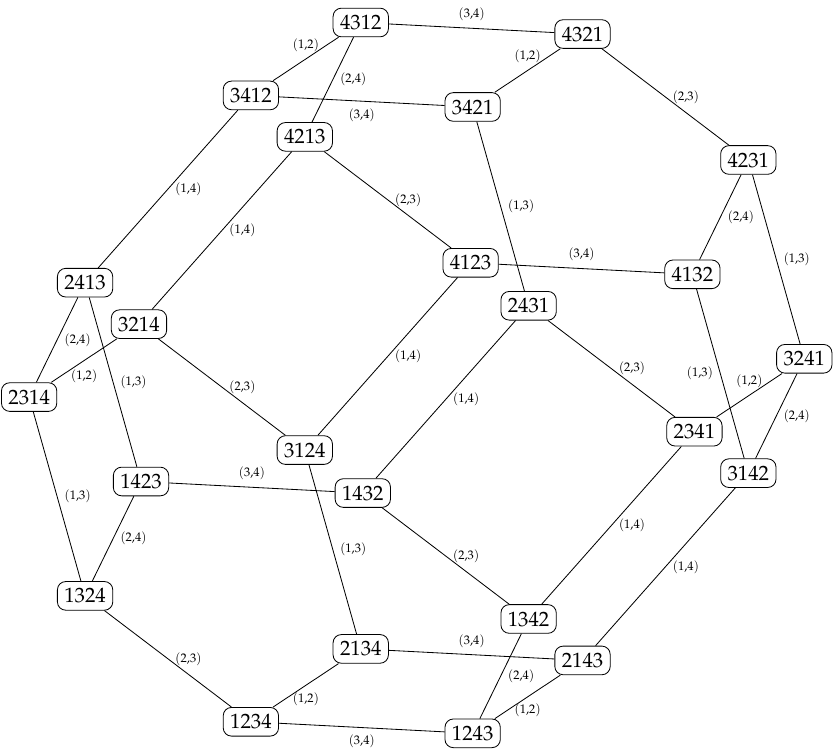}
\caption{The labeled undirected graph $G_4$.}
\label{f1}
\end{figure}

Denote by $\tr_{ij}(\pi)$ a permutation which is obtained from $\pi=\{\pi_1,...,\pi_n\}$ by transposition of two elements $\pi_i$ and $\pi_j$, $i\neq j$, $i,j\in \{1,...,n\}$, only in the case if
\begin{equation}\label{e12}
 |\pi_i-\pi_j|=1.
\end{equation}

Let $G_n=G_n(V,E)$ be an undirected graph such that $V(G_n)=S_n$ and $\{\pi,\varphi\}\in E(G_n)$ if and only if $\varphi=\tr_{ij}(\pi)$ for some $i\neq j$. The graph $G_n$ is known as an edge-graph of a permutohedron of order $n$. Let us remember that an adjacent transposition is a transposition $(\pi_i \, \, \pi_j)$ where the two elements are consecutive, i.e., when equality~(\ref{e12}) holds. In other words, two vertices of the graph $G_n$ are connected by the edge if and only if one vertex is obtained from the other by applying an adjacent transposition. For the edges $E(G_n)$ we define a labeling function $l\colon E(G_n)\to \{(i,j) \, | \, i<j\}$ in the following way:
let $e$ be the edge $\{\pi,\varphi\}$, then it is labeled with $l(e)=(i,j)$, which is the index pair for which $\varphi=\tr_{ij}(\pi)$ holds. The labeled graph $G_4$ is depicted at Figure~\ref{f1}.

Recall that a permutohedron of order $n$ is an $(n-1)$-dimensional polytope embedded in an $n$-dimensional Euclidean space, which is a convex hull of all $n!$ points formed by permuting the coordinates of the vector $(1,2,\ldots,n)$. Furthermore, the permutohedron is vertex-transitive, i.e., for every $\tau\in S_n$ the following implication holds:
$$
\{\pi,\varphi\}\in E(G_n) \Rightarrow \{\pi\circ\tau,\varphi\circ\tau\}\in E(G_n).
$$
This is indeed true, since $\varphi=\tr_{ij}(\pi)$ implies $\varphi\circ\tau = \tr_{\tau_i,\tau_j}(\pi\circ\tau)$, as $\tau$ only reorders the indices. In consequence, permutohedra are highly symmetric, which can be seen in the visualizations of $G_n$ for the cases $n=4$ and $n=3$. Both are depicted in Figures~\ref{f1} and~\ref{f2}, respectively.

%From now on, by the length of a path we understand the number of its edges.
%by $\mathfrak P_{\pi,\varphi}$  we denote the set of all these shortest paths between $\pi$ and $\varphi$.The length of the shortest paths is closely related to the discordance set by its cardinality, as the following proposition shows.
Recall that a distance $d_G$ between two vertices in a connected graph $G$ is the number of edges in a shortest path connecting them. The distance $d_G$ satisfies axioms (i)--(iii) of a metric space. Thus, $d_G$ is metric induced by $G$. In general, the shortest path connecting two any different vertices of $G_n$ is not obligatory unique. For the cases $n=1$ and $n=2$, the graphs are given by $V(G_1)=\{(1)\}$, $E(G_1)=\varnothing$ and $V(G_2)=\{(1,2), (2,1)\}$, $E(G_2)=\{\{(1,2), (2,1)\}\}$ respectively.

\begin{proposition} \label{p01}
For any $n\geqslant 1$ and for any $\pi, \varphi \in S_n$ the distance $d_{G_n}$ between these permutations equals the number of discordant pairs $|\dsc(\pi,\varphi)|$.
\end{proposition}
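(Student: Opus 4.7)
The plan is to prove $d_{G_n}(\pi,\varphi)=|\dsc(\pi,\varphi)|$ by establishing the two inequalities separately; the case $n=1$ is trivial, so I assume $n\geq 2$.

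For the lower bound I would show that along any edge of $G_n$ the cardinality $|\dsc(\cdot,\varphi)|$ changes by exactly one. If $\pi'=\tr_{ij}(\pi)$ with $\{\pi_i,\pi_j\}=\{k,k+1\}$, then for every position $l\notin\{i,j\}$ the value $\pi_l$ is either strictly less than $k$ or strictly greater than $k+1$, so $\pi_l-\pi_i$ and $\pi_l-\pi'_i$ carry the same sign, and likewise $\pi_l-\pi_j$ versus $\pi_l-\pi'_j$. Hence the indicators $\dsc_{i,l}(\cdot,\varphi)$ and $\dsc_{j,l}(\cdot,\varphi)$ are unaffected by the swap, whereas $\dsc_{i,j}(\cdot,\varphi)$ is flipped because $\pi_j-\pi_i$ changes sign while $\varphi_j-\varphi_i\neq 0$ keeps its sign. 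Consequently $|\dsc(\cdot,\varphi)|$ moves by exactly $\pm 1$ per edge, and any path of length $L$ from $\pi$ to $\varphi$ must satisfy $L\geq|\dsc(\pi,\varphi)|$.

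For the upper bound I would argue by induction on $m=|\dsc(\pi,\varphi)|$. The base $m=0$ forces $\pi=\varphi$. For the inductive step it suffices to exhibit an edge $\{\pi,\tr_{ij}(\pi)\}$ of $G_n$ with $(i,j)\in\dsc(\pi,\varphi)$: the preceding observation then gives $|\dsc(\tr_{ij}(\pi),\varphi)|=m-1$, and the inductive hypothesis yields $d_{G_n}(\pi,\varphi)\leq 1+(m-1)=m$.

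The main obstacle is therefore the following key lemma: if $\pi\neq\varphi$, then $\dsc(\pi,\varphi)$ contains at least one pair $(i,j)$ with $|\pi_i-\pi_j|=1$. I plan to prove it by an extremal argument. Pick $(i,j)\in\dsc(\pi,\varphi)$ minimizing $|\pi_i-\pi_j|$ and suppose for contradiction that $|\pi_i-\pi_j|\geq 2$; by the symmetry of the discordance indicator I may assume $\pi_i<\pi_j$ and $\varphi_i>\varphi_j$. Put $k=\pi_i+1$ and $m=\pi^{-1}(k)$, so that $\pi_i<\pi_m=k<\pi_j$ and, since $k\notin\{\pi_i,\pi_j\}$, also $m\notin\{i,j\}$; in particular $\varphi_m\neq\varphi_i$ and $\varphi_m\neq\varphi_j$. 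If $\varphi_m<\varphi_i$, then $\pi_m-\pi_i>0$ and $\varphi_m-\varphi_i<0$ have opposite signs, so the pair $\{i,m\}$ is discordant, yet $|\pi_i-\pi_m|=1$, contradicting minimality. Otherwise $\varphi_m>\varphi_i>\varphi_j$; then $\pi_j-\pi_m>0$ and $\varphi_j-\varphi_m<0$ show that $\{m,j\}$ is discordant, while $|\pi_m-\pi_j|=\pi_j-k<\pi_j-\pi_i$, again a contradiction. Hence the minimum of $|\pi_i-\pi_j|$ on $\dsc(\pi,\varphi)$ equals $1$, which proves the lemma and concludes the argument.
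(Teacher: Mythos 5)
Your proof is correct. The overall skeleton matches the paper's: a lower bound obtained by showing that each edge of $G_n$ changes $|\dsc(\cdot,\varphi)|$ by exactly one, and an upper bound by induction on $|\dsc(\pi,\varphi)|$ that peels off one discordant pair per step. Where you genuinely diverge is in the key existence step for the inductive construction. The paper first invokes vertex-transitivity of the permutohedron to normalize $\pi=\id$, after which the claim reduces to the fact that any $\varphi\neq\id$ contains two consecutive values occurring in the wrong order (proved by the ``$1$ before $2$ before $3$, \dots'' contradiction). You instead prove, for arbitrary $\pi,\varphi$, that $\dsc(\pi,\varphi)$ contains a pair with $|\pi_i-\pi_j|=1$, via an extremal argument minimizing $|\pi_i-\pi_j|$ over discordant pairs; this avoids the appeal to vertex-transitivity entirely, and your lemma is the symmetric, two-permutation version of the paper's one-permutation observation. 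A second, smaller difference is that you actually verify the lower-bound step in detail --- checking that under $\tr_{ij}$ only the indicator $\dsc_{i,j}$ flips while all indicators involving a third position are unchanged --- whereas the paper merely asserts that an adjacent transposition on a discordant pair changes the count by exactly one. The only blemish is notational: you use $m$ both for the induction parameter $|\dsc(\pi,\varphi)|$ and for the position $\pi^{-1}(k)$ inside the lemma; rename one of them.
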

%\begin{proposition} \label{p01}
%Let $\pi, \varphi$ be different vertices of $G_n$ and $\mathcal P$ be a path in $G_n$ connecting $\pi$ and $\varphi$. Then $\mathcal P\in \mathfrak P_{\pi,\varphi}$ if and only if $|E(\mathcal P)|=|\dsc(\pi,\varphi)|$.
%\end{proposition}

\begin{proof}
For $n=1,2$ the statement is trivial. Let $n\geqslant 3$. An adjacent transposition on a discordant pair decreases the inversion number by exactly one. Indeed, in this case only two consecutive integers are swapped and all other elements of permutation preserve their order in relation to each of the swapped elements. So every path between $\pi$ and $\varphi$ require at least $|\dsc(\pi,\varphi)|$ edges.

Let us show the existence of such path by induction on $|\dsc(\pi,\varphi)|$. Since the graph is vertex transitive we may assume that $\pi = id$. If $|\dsc(id,\varphi)|=0$, then $\varphi=id$ and $d_{G_n}(id,id)=0$. Let $\varphi = (\varphi_1,...,\varphi_n)$ and let $|\dsc(id,\varphi)|\neq 0$.
%It is clear that in this case $\varphi \neq id$.
Hence there exists at least one pair of elements $\varphi_i$ and $\varphi_j$ such that $i<j$ and $\varphi_i-\varphi_j=1$. Indeed, assuming the opposite we immediately get that 1 must be before 2, 2 before 3,... and, in consequence, $\varphi=id$. Let $\hat{\varphi}=\tr_{ij}(\varphi)$.
Then $|\dsc(id,\hat{\varphi})|=|\dsc(id,\varphi)|-1$. By induction hypothesis $d_{G_n}(id,\hat{\varphi})=|\dsc(id,\hat{\varphi})|$. Since $\dsc(id,{\varphi})=\dsc(id,\hat{\varphi})\cup\{(i,j)\}$ we obtain the necessary equality. %\mbox{$d_{G_n}(u,v)=|\dsc(\pi,\varphi)|$}.
\end{proof}

Let $\mathcal P= \{\pi=\psi_1,\ldots,\psi_{n}=\varphi\}$ be a path joining $\pi$ and $\varphi$ in $G_n$. By the definition of $G_n$ we have
\begin{equation}\label{e13}
\begin{split}
\psi_2=\tr_{i_1,j_1}(\psi_1) \text{ and } &\dsc(\psi_1,\psi_2)=\{(i_1,j_1)\}, \\
&\cdots \\
\psi_n=\tr_{i_{n-1},j_{n-1}}(\psi_{n-1}) \text{ and } &\dsc(\psi_{n-1},\psi_n)=\{(i_{n-1},j_{n-1})\}.
\end{split}
\end{equation}

The following corollary follows directly from Proposition~\ref{p01} and the definition of the graph $G_n$.
\begin{corollary}\label{c12}
Let $\pi, \varphi$ be different vertices of $G_n$ and let $\mathcal P$ be a path in $G_n$ connecting $\pi$ and $\varphi$. Then $\mcp$ is a shortest-path if and only if all the pairs $(i_1, j_1),\ldots,(i_{n-1}, j_{n-1})$ defined by~(\ref{e13}) are different.
\end{corollary}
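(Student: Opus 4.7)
The plan is to reduce the corollary to a simple parity/counting argument that uses Proposition~\ref{p01} as a black box.

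The key local fact I would establish first is the following: applying $\tr_{i,j}$ to a permutation changes the discordance indicator $\dsc_{k,\ell}$ of exactly one index pair, namely $(k,\ell)=(i,j)$. This holds because the two swapped values differ by one (by condition~(\ref{e12})), so the value at any third position $k \notin \{i,j\}$ is either strictly below both or strictly above both of them, and hence its relative order with respect to each of the swapped values is untouched. Consequently, moving from $\psi_k$ to $\psi_{k+1}$ along $\mcp$ flips the single indicator $\dsc_{i_k,j_k}$ and leaves all other discordance indicators fixed.

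Next I would introduce, for each index pair $(i,j)$ with $i<j$, the counter $t_{(i,j)}$ equal to the number of occurrences of the label $(i,j)$ among $(i_1,j_1),\ldots,(i_{n-1},j_{n-1})$ along $\mcp$. The length of $\mcp$ is then $\sum_{(i,j)} t_{(i,j)}$, while the local fact iterated along the path yields
\begin{equation*}
\dsc_{i,j}(\pi,\varphi)=1 \quad\Longleftrightarrow\quad t_{(i,j)} \text{ is odd}.
\end{equation*}
Hence $\sum_{(i,j)} t_{(i,j)} \geqslant |\dsc(\pi,\varphi)|$, with equality if and only if every $t_{(i,j)}$ belongs to $\{0,1\}$, which is exactly to say that the labels along $\mcp$ are pairwise distinct.

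Finally, Proposition~\ref{p01} gives $d_{G_n}(\pi,\varphi)=|\dsc(\pi,\varphi)|$, so a $\pi$-to-$\varphi$ path in $G_n$ is shortest precisely when its length attains this lower bound, which by the previous step happens iff all of its labels are distinct. I expect no substantial obstacle; the only point that needs a careful word is the local observation that an adjacent-value transposition flips a single discordance indicator, since it is precisely there that the condition $|\pi_i-\pi_j|=1$ built into the definition of $\tr_{i,j}$ is used.
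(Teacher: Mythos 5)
Your proof is correct and follows essentially the route the paper intends: the paper offers no explicit argument beyond ``follows directly from Proposition~\ref{p01} and the definition of $G_n$'', and your elaboration rests on exactly those two ingredients --- the single-indicator-flip property of an adjacent transposition (already stated inside the proof of Proposition~\ref{p01}) together with the identity $d_{G_n}(\pi,\varphi)=|\dsc(\pi,\varphi)|$. The parity bookkeeping with the counters $t_{(i,j)}$ is a clean and complete way to make the ``only if'' direction rigorous.
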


\begin{proposition}\label{p13}
Let $\pi,\varphi$ be different vertices of the graph $G_n$, $n\geqslant 2$. Then the following statements are equivalent for every $\psi\in V(G_n)$:
\begin{itemize}
\item [\emph{(i)}] $\dsc(\pi,\varphi)=\dsc(\pi,\psi)\cup\dsc(\psi,\varphi)$.
\item [\emph{(ii)}] $\dsc(\pi,\psi)\cap\dsc(\psi,\varphi) = \varnothing$.
\item [\emph{(iii)}] There exists a shortest-path $\mathcal{P}_{\pi,\varphi}$ between $\pi$ and $\varphi$ such that $\psi\in V(\mathcal{P}_{\pi,\varphi})$.
\end{itemize}
\end{proposition}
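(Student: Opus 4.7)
The plan is to prove (i)$\Leftrightarrow$(ii) by a pointwise analysis of the discordance indicators, and then (ii)$\Leftrightarrow$(iii) via the cardinality formula combined with Proposition~\ref{p01}.

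For (i)$\Leftrightarrow$(ii), I would recall the dichotomy already established inside the proof of Theorem~\ref{thm:metric}: for every index pair $(i,j)$ with $i<j$, one has $\dsc_{i,j}(\pi,\varphi)=0$ if $\dsc_{i,j}(\pi,\psi)=\dsc_{i,j}(\psi,\varphi)$, and $\dsc_{i,j}(\pi,\varphi)=1$ otherwise. In other words, $\dsc(\pi,\varphi)$ is the symmetric difference of $\dsc(\pi,\psi)$ and $\dsc(\psi,\varphi)$. This immediately gives the inclusion $\dsc(\pi,\varphi)\subseteq \dsc(\pi,\psi)\cup\dsc(\psi,\varphi)$, with equality if and only if no pair $(i,j)$ lies in both $\dsc(\pi,\psi)$ and $\dsc(\psi,\varphi)$; this is precisely the equivalence (i)$\Leftrightarrow$(ii).

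For (ii)$\Leftrightarrow$(iii), I would use that the symmetric-difference identity yields
\begin{equation*}
|\dsc(\pi,\varphi)|=|\dsc(\pi,\psi)|+|\dsc(\psi,\varphi)|-2\,|\dsc(\pi,\psi)\cap\dsc(\psi,\varphi)|,
\end{equation*}
so (ii) is equivalent to $|\dsc(\pi,\varphi)|=|\dsc(\pi,\psi)|+|\dsc(\psi,\varphi)|$. By Proposition~\ref{p01} the right-hand sides equal $d_{G_n}(\pi,\psi)+d_{G_n}(\psi,\varphi)$ and $d_{G_n}(\pi,\varphi)$ respectively, so (ii) is equivalent to the graph-metric equality $d_{G_n}(\pi,\varphi)=d_{G_n}(\pi,\psi)+d_{G_n}(\psi,\varphi)$.

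Finally, I would translate the graph-metric equality into (iii) by a standard concatenation/splitting argument. If the equality holds, a shortest path from $\pi$ to $\psi$ concatenated with a shortest path from $\psi$ to $\varphi$ produces a walk through $\psi$ whose length matches $d_{G_n}(\pi,\varphi)$; since this length is minimal, the walk is a shortest path (and one can verify it is simple, else shortcutting would violate minimality). Conversely, if $\psi\in V(\mathcal P_{\pi,\varphi})$ for some shortest path $\mathcal P_{\pi,\varphi}$, splitting $\mathcal P_{\pi,\varphi}$ at $\psi$ yields two subpaths of total length $d_{G_n}(\pi,\varphi)$, and each of them must itself be shortest (otherwise the triangle inequality is strict). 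I expect the only mildly delicate point is verifying that the concatenation is a simple path; by Corollary~\ref{c12}, simplicity amounts to the edge labels along the way being pairwise distinct, and the disjointness in (ii) guarantees exactly that, since the labels used in a shortest path from $\pi$ to $\psi$ form $\dsc(\pi,\psi)$ and those from $\psi$ to $\varphi$ form $\dsc(\psi,\varphi)$.
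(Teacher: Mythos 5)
Your proof is correct, and it takes a genuinely different route from the paper's for the harder half of the statement. For (i)$\Leftrightarrow$(ii) you use the same pointwise sign computation the paper uses for (i)$\Rightarrow$(ii) (multiplying $(\pi_j-\pi_i)(\psi_j-\psi_i)<0$ and $(\psi_j-\psi_i)(\varphi_j-\varphi_i)<0$), but packaging it as the symmetric-difference identity $\dsc(\pi,\varphi)=\dsc(\pi,\psi)\,\triangle\,\dsc(\psi,\varphi)$ gives both directions at once, whereas the paper only proves (i)$\Rightarrow$(ii) directly and recovers the converse by closing the cycle (i)$\Rightarrow$(ii)$\Rightarrow$(iii)$\Rightarrow$(i). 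Where you really diverge is (ii)$\Leftrightarrow$(iii): the paper stays inside the labeled-path machinery, concatenating shortest paths and invoking Corollary~\ref{c12} (pairwise distinct labels) to certify the compound path is shortest, and conversely reading off the nested discordance sets along a shortest path from the relations~(\ref{e13}); you instead pass through the graph metric, using the cardinality form of the symmetric-difference identity together with Proposition~\ref{p01} to convert (ii) into the betweenness equation $d_{G_n}(\pi,\varphi)=d_{G_n}(\pi,\psi)+d_{G_n}(\psi,\varphi)$, and then a standard concatenation/splitting argument to identify that with membership in a shortest path. Your route cleanly separates the purely graph-metric step from the combinatorics of labels and is arguably more economical; the paper's route has the side benefit of producing the explicit description of the discordance sets along a shortest path, which is reused in Theorem~\ref{th3.4}. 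The one point you should make fully explicit is the simplicity of the concatenated walk: a walk whose length equals the distance between its endpoints cannot repeat a vertex, since excising the closed sub-walk between two occurrences would yield a strictly shorter walk and hence a path shorter than the distance --- you flag this correctly, so there is no gap.
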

\begin{proof}
Let us prove the implication (i)$\Rightarrow$(ii) by contradiction. Let us assume that there exists a pair
\begin{equation}\label{e311}
(i,j)\in\dsc(\pi,\psi)\cap\dsc(\psi,\varphi).
\end{equation}
Since $\dsc(\pi,\psi)\cap\dsc(\psi,\varphi)\subseteq \dsc(\pi,\psi)\cup\dsc(\psi,\varphi)$, by (i) we have $(i,j)\in\dsc(\pi,\varphi)$.  From~(\ref{e311}) it follows that $\dsc_{i,j}(\pi,\psi)=\dsc_{i,j}(\psi,\varphi)=1$ or equally
 \begin{equation*}
 (\pi_j-\pi_i)(\psi_j-\psi_i)<0  \ \text{ and } \  (\psi_j-\psi_i)(\varphi_j-\varphi_i)<0.
 \end{equation*} Multiplying both left sides gives
\begin{align*}
&\quad(\pi_j-\pi_i)(\psi_j-\psi_i)^2(\varphi_j-\varphi_i) >0 \\
\Rightarrow&\quad (\pi_j-\pi_i)(\varphi_j-\varphi_i) >0 \\
\Rightarrow&\quad \dsc_{i,j}(\pi,\varphi) =0 \\
\Rightarrow&\quad (i,j) \notin \dsc(\pi,\varphi).
\end{align*}
This contradicts to our assumption.

(ii)$\Rightarrow$(iii) Let $\mathcal{P}_{\pi,\psi}$  and $\mathcal{P}_{\psi,\varphi}$ be any shortest paths between the respective vertices and let $\mathcal{P}_{\pi,\varphi}=\mathcal{P}_{\pi,\psi} \cup \mathcal{P}_{\psi,\varphi}$ be the compound path from $\pi$ to $\varphi$ over $\psi$. By (ii) all the pairs $(i_1, j_1),\ldots,(i_{n-1}, j_{n-1})$ defined by~(\ref{e13}) for the path $\mathcal{P}_{\pi,\varphi}$ are different. Therefore by Corollary~\ref{c12} the compound $\mathcal{P}_{\pi,\varphi}$ is a shortest path between $\pi$ and $\varphi$.

(iii) $\Rightarrow$ (i)
%Let there exist $\mathcal P_{\pi,\varphi}$ such that $\psi\in V(\mathcal{P}_{\pi,\varphi})$ and
Let the relations~(\ref{e13}) hold for $\mcp_{\pi,\varphi}$. By Corollary~\ref{c12} all the pairs $(i_1,j_1),...,(i_{n-1},j_{n-1})$ are different. From~(\ref{e12}) it follows that
$$
\dsc(\psi_1,\psi_{k+1})=\dsc(\psi_1,\psi_k)\cup \{(i_k,j_k)\},\quad k=1,...,n-1.
$$
Hence,
$$
\dsc(\pi,\psi_k)=\dsc(\psi_1,\psi_k)=\{(i_1,j_1),...,(i_{k-1},j_{k-1})\}, \quad k=2,...,n-1,
$$
$$
\dsc(\pi,\varphi)=\dsc(\psi_1,\psi_n)=\{(i_1,j_1),...,(i_{n-1},j_{n-1})\}.
$$
Analogously,
$$
\dsc(\psi_k,\varphi)=\{(i_k,j_k),...,(i_{n-1},j_{n-1})\} \quad k=2,...,n-1,
$$
which establishes~(i) with $\psi=\psi_k$.
\end{proof}

%A metric defined over a set of vertices $V(G)$ in terms of distances in a graph is called a graph metric or metric induced by $G$.

\begin{theorem}\label{th3.4}
For any $n\geqslant 3$ and any weighting matrix $W$ (strictly upper triangular and positive), and any three different permutations $\pi$, $\psi$, $\varphi \in S_n$, $\psi$ lies between $\pi$ and $\varphi$  with respect to the metric $d_{G_n}$ if and only if $\psi$ lies between $\pi$ and $\varphi$ with respect to $d_W$.

%Let $\pi,\psi,\varphi$ be different elements of $S_n$, $n\geqslant 3$. Then the following conditions are equivalent:
%\begin{itemize}
%  \item [\emph{(i)}] $\psi$ is a point belonging to one of the the shortest-paths  connecting $\pi$ and $\varphi$ in $G_n$.

% \item [\emph{(ii)}] $\psi$ lies between $\pi$ and $\varphi$ in the metric space $(S_n,d_W)$, i.e., the equality
%\begin{equation}\label{e35}
%d_W(\pi,\varphi)=d_W(\pi,\psi)+d_W(\psi,\varphi)
%\end{equation}
%holds.
%\end{itemize}
\end{theorem}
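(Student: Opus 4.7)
The plan is to translate both betweenness conditions into the same set-theoretic condition on discordance sets, using Propositions~\ref{p01} and~\ref{p13} on the graph side and the $\{0,2\}$-dichotomy established in the proof of Theorem~\ref{thm:metric} on the weighted side.

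First I would note that $\psi$ lies between $\pi$ and $\varphi$ with respect to $d_W$ is equivalent to the quantity $\Delta$ from~(\ref{eq1}) being zero. The proof of Theorem~\ref{thm:metric} shows that each summand
$\dsc_{i,j}(\pi,\psi)+\dsc_{i,j}(\psi,\varphi)-\dsc_{i,j}(\pi,\varphi)$
takes only the values $0$ and $2$. Since by hypothesis $w_{i,j}>0$ for all $i<j$, a nonnegative weighted sum of such terms with positive weights equals zero if and only if each individual summand vanishes.

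Next I would check by direct case analysis that this summand equals $2$ precisely when $\dsc_{i,j}(\pi,\psi)=\dsc_{i,j}(\psi,\varphi)=1$, i.e.\ when $(i,j)\in\dsc(\pi,\psi)\cap\dsc(\psi,\varphi)$, while in all other configurations it equals $0$. Consequently, $d_W$-betweenness of $\psi$ between $\pi$ and $\varphi$ is equivalent to
\[
\dsc(\pi,\psi)\cap\dsc(\psi,\varphi)=\varnothing,
\]
which is exactly condition~(ii) of Proposition~\ref{p13}.

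Finally, by the equivalence (ii)$\Leftrightarrow$(iii) of Proposition~\ref{p13}, this set-theoretic condition is equivalent to the existence of a shortest path in $G_n$ from $\pi$ to $\varphi$ that passes through $\psi$. By Proposition~\ref{p01}, the graph distance $d_{G_n}$ coincides with the cardinality of the discordance set, so the existence of such a shortest path is exactly the statement $d_{G_n}(\pi,\varphi)=d_{G_n}(\pi,\psi)+d_{G_n}(\psi,\varphi)$, i.e.\ $d_{G_n}$-betweenness of $\psi$. This completes the chain of equivalences. The only step that deserves care is the passage from ``$\Delta=0$'' to ``every summand is $0$''; here the strict positivity of the weights (not merely their nonnegativity) is essential, and this is the one place where the hypothesis is actually used. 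Beyond that, the proof is a direct assembly of the earlier results.
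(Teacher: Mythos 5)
Your proof is correct and follows essentially the same route as the paper's: both reduce betweenness in each metric to the discordance-set conditions of Proposition~\ref{p13} and invoke the strict positivity of the weights at the same decisive point. The only difference is organizational --- you run a single chain of equivalences through condition (ii) of Proposition~\ref{p13} via the $\{0,2\}$-valued summands of $\Delta$, whereas the paper argues the two implications separately (the converse by contradiction, via the proper inclusion $\dsc(\pi,\varphi)\subsetneq\dsc(\pi,\psi)\cup\dsc(\psi,\varphi)$); the mathematical content is the same.
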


\begin{proof}
Let $\psi$ lie between $\pi$ and $\varphi$ with respect to $d_{G_n}$. Then, $\psi$ belongs to some shortest path $\mathcal P_{\pi, \varphi}$ connecting $\pi$ and $\varphi$ in $G_n$. The distance $d_W$ is the normalized sum of weights associated to discordant pairs between two permutations. These discordant pairs are exactly the labels of edges on shortest paths between those two permutations.
By Corollary~\ref{c12}  the labels on the shortest-path $\mathcal P_{\pi, \varphi}$ are exactly the disjoint union of the labels on the shortest-paths $\mathcal P_{\pi, \psi}$ and $\mathcal P_{\psi, \varphi}$. Hence we have the equality $d_W(\pi,\varphi)=d_W(\pi,\psi)+d_W(\psi,\varphi)$.

%The implication (i)$\Rightarrow$(ii) follows directly from the implication (iii)$\Rightarrow$(i) of Proposition~\ref{p13} and~(\ref{e11}).

Let us show the converse implication by contradiction.
Let $\psi$ be a permutation that does not lie between $\pi$ and $\varphi$ with respect to $d_{G_n}$.
Then $\psi$ does not belong to any shortest path connecting $\pi$ and $\varphi$ in $G_n$. Proposition~\ref{p13} implies
$\dsc(\pi,\varphi)\neq\dsc(\pi,\psi)\cup\dsc(\psi,\varphi)$.
Now if $\dsc_{i,j}(\pi,\varphi)=1$, there are two possibilities:
$$\dsc_{i,j}(\pi,\psi)=1 \ \text{ and }\ \dsc_{i,j}(\psi,\varphi)=0$$
or
$$\dsc_{i,j}(\pi,\psi)=0  \ \text{ and }\ \dsc_{i,j}(\psi,\varphi)=1.$$
In other words $(i,j) \in \dsc(\pi,\varphi)$ implies $(i,j) \in \dsc(\pi,\psi)\cup\dsc(\psi,\varphi)$. Hence, $\dsc(\pi,\varphi)$ is a proper subset of $\dsc(\pi,\psi)\cup\dsc(\psi,\varphi)$ and the equality
\begin{equation*}%\label{e36}
\frac{\sum_{i,j=1}^n \dsc_{i,j}(\pi,\varphi)\cdot w_{i,j}}{\sum_{i,j=1}^n w_{i,j}} = \frac{\sum_{i,j=1}^n \dsc_{i,j}(\pi,\psi)\cdot w_{i,j}}{\sum_{i,j=1}^n w_{i,j}}+\frac{\sum_{i,j=1}^n \dsc_{i,j}(\psi,\varphi)\cdot w_{i,j}}{\sum_{i,j=1}^n w_{i,j}}
\end{equation*}
is impossible, since $d_W$ is a metric and all $w_{i,j}$ are positive.
\end{proof}

\section{Pseudolinear quadruples in $(S_n,d_W)$}

The aim of this section is to describe the occurrence of special type four-point subsets of the space $(S_n,d_W)$, the so called ``pseudolinear quadruples''.

In 1928 K. Menger~\cite{Me28} proved that if every three points of a metric space $X$, $|X|\geqslant 3$, are embeddable into $\RR^1$, then $X$ is isometric to some subset of $\RR^1$ or $X$ is a pseudolinear quadruple. Recall that a four-point metric space $(X,d)$ is called a {pseudolinear quadruple} if there exists an enumeration $x_1, x_2, x_3, x_4$ of the points of $X$ such that the equalities
\begin{gather}\label{e21}
d(x_1,x_2)=d(x_3,x_4)=s, \, \, d(x_2,x_3)=d(x_4,x_1)=t, \\
d(x_2,x_4)=d(x_3,x_1)=s+t \notag
\end{gather}
hold with some positive reals $s$ and $t$. Note also that equilateral pseudolinear quadruples are known by their extremal properties~\cite{DP11}.

Let $(X,d)$ be a metric space. Recall that for every nonempty set $A\subseteq X$ the quantity
$$\diam A = \sup \{d(x,y)\colon x,y \in A\}$$
is the {diameter} of $A$. We shall say that points $a, b$ are {diametrical} for the set $A$ if $d(a,b)=\diam A$.

Everywhere below in this section we consider that $n\geqslant 3$, since this is a necessary condition for the existence of pseudolinear quadruples in $(S_n,d_W)$.

\begin{proposition}\label{p14}
Let $\pi$ and $\varphi$ be nondiametrical points in the pseudometric space $(S_n,d_W)$. Then the set $X=\{\pi,\varphi,\hat{\pi},\hat{\varphi}\}$  forms a pseudolinear quadruple.
\end{proposition}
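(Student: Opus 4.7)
The plan is to compute the six pairwise $d_W$-distances among $\pi$, $\varphi$, $\hat{\pi}$, $\hat{\varphi}$ using only Proposition~\ref{p22} and then read off the pseudolinear quadruple pattern directly. Setting $s := d_W(\pi,\varphi)$, part (iv) immediately gives $d_W(\hat{\pi},\hat{\varphi}) = s$; two applications of (iii) (once to $(\pi,\varphi)$ and once to the swapped pair) yield $d_W(\pi,\hat{\varphi}) = d_W(\hat{\pi},\varphi) = 1-s$; and the specialization $\varphi = \pi$ in (iii) gives $d_W(\pi,\hat{\pi}) = 1$, with the analogous identity for $\varphi$. With $t := 1-s$ and the enumeration $x_1 = \pi$, $x_2 = \varphi$, $x_3 = \hat{\pi}$, $x_4 = \hat{\varphi}$, these six numbers fit~(\ref{e21}) on the nose: $s$ on the edges $\{x_1,x_2\}$ and $\{x_3,x_4\}$, $t$ on the edges $\{x_2,x_3\}$ and $\{x_4,x_1\}$, and $s+t = 1$ on the two remaining ``diagonal'' pairs.

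What really requires the hypothesis is the setup in the definition of a pseudolinear quadruple: $s$ and $t$ must be positive real numbers and $X$ must consist of four distinct elements. Nondiametricality unpacks as $s < \diam(S_n,d_W) = 1$ (the diameter is attained on any pair $(\pi,\hat{\pi})$ by the $\varphi=\pi$ case of (iii)), so $t > 0$; together with $\pi \neq \varphi$ (in the pseudometric sense) it also forces $s > 0$. For distinctness, since $n \geq 3$ no permutation coincides with its own ordinal inverse, so $\pi \neq \hat{\pi}$ and $\varphi \neq \hat{\varphi}$; an equality $\varphi = \hat{\pi}$ would force $d_W(\pi,\varphi) = 1$, contradicting nondiametricality, hence $\varphi \neq \hat{\pi}$ and equivalently $\pi \neq \hat{\varphi}$.

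I do not expect a substantive obstacle. The one point worth mild care is that the ambient space is only pseudometric in the statement, so one should note that once $s, t > 0$ and $|X| = 4$ are established, the restriction of $d_W$ to $X$ is automatically a bona fide metric, which is what the definition of pseudolinear quadruple demands.
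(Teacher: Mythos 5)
Your proof is correct and follows essentially the same route as the paper's: both derive the six pairwise distances from parts (iii)--(v) of Proposition~\ref{p22} and match them against~(\ref{e21}) with $x_1=\pi$, $x_2=\varphi$, $x_3=\hat{\pi}$, $x_4=\hat{\varphi}$. The only difference is that you additionally verify the distinctness of the four points and the positivity of $s$ and $t$ --- degenerate cases the paper's proof passes over in silence --- which is a welcome (and correct) extra step rather than a change of method.
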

\begin{proof}
By condition (iv) of Proposition~\ref{p22} we have
$$
d_W(\pi,\varphi)=d_W(\hat{\pi},\hat{\varphi}),\quad
d_W(\pi,\hat{\varphi})=d_W(\varphi,\hat{\pi}).
$$
Using conditions (iii) and (v) of the same proposition  we obtain the following equalities:
$$
d_W(\pi,\varphi)+d_W(\varphi,\hat{\pi})
=d_W(\pi,\varphi)+1-d_W(\varphi,\pi)=1=d_W(\pi,\hat{\pi}),
$$
$$
d_W(\varphi,\pi)+d_W(\pi,\hat{\varphi})
=d_W(\varphi,\pi)+1-d_W(\pi,\varphi)=1=d_W(\varphi,\hat{\varphi}).
$$
Thus, $(X,d_W)$ is a pseudolinear quadruple with $x_1=\pi$, $x_2=\varphi$, $x_3=\hat{\pi}$, $x_4=\hat{\varphi}$.
\end{proof}

%%copied
It is easy to see that every cycle in $G_n$ is even.
Let $C$ be a labeled cycle in $G_n$.
We shall say that $C$ has a symmetric labeling if $l(e)=l(\bar{e})$, where $\bar{e}$ is an edge opposite to $e$ in $C$.
Denote by $E_C(i,j)$ the set of edges of a cycle $C$ labeled by the label $(i,j)$.

\begin{proposition}\label{p15}
Let $C$ be a simple cycle in $G_n$ having a symmetric labeling.
Let for every label $(i,j)$ of the cycle $C$ the equality $|E_C(i,j)|=2(2k-1)$ hold for some $k\in \NN^+$. Then
for every different non opposite vertices   $\pi,\varphi$ of $C$  the set $\{\pi,\varphi,\bar{\pi},\bar{\varphi}\}$ form a pseudolinear quadruple in $(S_n,d_W)$,
where $\bar{\pi},\bar{\varphi}$ are opposite vertices to $\pi,\varphi$ in $C$, respectively.
\end{proposition}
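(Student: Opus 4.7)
The plan is to realize all distances and betweenness as parity statements about labels along arcs of $C$. Write $C = (v_0, v_1, \ldots, v_{2m-1})$ cyclically with $\pi = v_0$, so $\bar\pi = v_m$, and set $e_p = \{v_{p-1}, v_p\}$ for $p = 1, \ldots, 2m$ (indices mod $2m$). After rotating the cycle, I may assume $\varphi = v_r$ with $1 \leq r \leq m-1$, so $\bar\varphi = v_{r+m}$. The symmetric labeling becomes $l(e_{p+m}) = l(e_p)$, and since every label $(i,j)$ in the label set $L$ of $C$ occurs $2(2k-1)$ times in total (with $k = k(i,j)\in\NN^+$), the pairing forces it to occur exactly $2k-1$ times, an odd number, among $e_1, \ldots, e_m$.

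Next I establish a parity lemma: for every arc $v_a \to v_{a+1} \to \cdots \to v_b$ in $C$,
$$
\dsc(v_a, v_b) = \{(i,j)\in L : (i,j)\text{ appears an odd number of times among }l(e_{a+1}), \ldots, l(e_b)\}.
$$
The proof is by induction on $b-a$. The step uses that $\tr_{ij}$ swaps two \emph{consecutive} values, so for any index $p\notin\{i,j\}$ the value $(v_{b-1})_p$ is strictly smaller or strictly larger than both swapped values; hence the relative order at any pair other than $(i,j)$ is preserved, while the order at $(i,j)$ flips. Therefore $\dsc(v_a, v_b) = \dsc(v_a, v_{b-1}) \,\triangle\, \{l(e_b)\}$, closing the induction.

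Combining the parity lemma with $l(e_{p+m}) = l(e_p)$, the arcs $\pi\to\varphi$ and $\bar\pi\to\bar\varphi$ carry the same multiset of labels $l(e_1), \ldots, l(e_r)$, while $\varphi\to\bar\pi$ and $\bar\varphi\to\pi$ both carry $l(e_{r+1}), \ldots, l(e_m)$. Consequently $\dsc(\pi,\varphi) = \dsc(\bar\pi,\bar\varphi)$ and $\dsc(\varphi,\bar\pi) = \dsc(\bar\varphi,\pi)$, yielding $d_W(\pi,\varphi)=d_W(\bar\pi,\bar\varphi)=:s$ and $d_W(\varphi,\bar\pi)=d_W(\bar\varphi,\pi)=:t$. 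Moreover, both half-cycles $v_0\to v_m$ and $v_r\to v_{r+m}$ contain every label of $L$ with the same odd multiplicity $2k-1$, so $\dsc(\pi,\bar\pi) = \dsc(\varphi,\bar\varphi) = L$, and the two ``diagonal'' distances are equal.

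It remains to prove $d_W(\pi,\bar\pi)=s+t$, equivalently that $\varphi$ lies metrically between $\pi$ and $\bar\pi$. By Theorem~\ref{th3.4} together with Proposition~\ref{p13}(ii), this reduces to $\dsc(\pi,\varphi)\cap\dsc(\varphi,\bar\pi) = \varnothing$. Fix any $(i,j)\in L$ with half-cycle count $2k-1$, and split this as $a+b$, where $a$ is its multiplicity in $l(e_1),\ldots,l(e_r)$ and $b$ its multiplicity in $l(e_{r+1}),\ldots,l(e_m)$. Since $a+b$ is odd, exactly one of $a,b$ is odd, so by the parity lemma $(i,j)$ belongs to exactly one of $\dsc(\pi,\varphi)$ and $\dsc(\varphi,\bar\pi)$, giving disjointness. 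The main obstacle is isolating the parity lemma; once the XOR viewpoint is in place, the symmetric labeling supplies all pairwise equalities and the oddness of $2k-1$ (which is exactly what $|E_C(i,j)|\equiv 2\pmod{4}$ encodes) supplies the betweenness, so $(\pi,\varphi,\bar\pi,\bar\varphi)$ realizes the configuration~(\ref{e21}) with $x_1=\pi$, $x_2=\varphi$, $x_3=\bar\pi$, $x_4=\bar\varphi$.
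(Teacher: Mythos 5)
Your proof is correct and follows essentially the same route as the paper's: both arguments rest on the observation that, under the symmetric labeling with each label occurring $2(2k-1)$ times, every label appears an odd number of times on each half-cycle, so the parity of its occurrences on the two sub-arcs determines the discordance indicators and yields the additivity and the pairwise equalities of (\ref{e21}). The only differences are cosmetic: you isolate the parity statement as an explicit lemma (which the paper uses implicitly, having justified it in the proof of Proposition~\ref{p01}) and detour through Theorem~\ref{th3.4} and Proposition~\ref{p13} for the betweenness, which your parity lemma already gives directly.
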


\begin{proof}
Let $\pi\in V(C)$ and let $\mcp_{\pi,\bar{\pi}}$ be one of the paths connecting $\pi$ and $\bar{\pi}$ in $C$. Without loss of generality, consider that $\varphi \in V(\mcp_{\pi,\bar{\pi}})$. Since $C$ has a symmetric labeling, for every label $(i,j)$ of $C$ the number of edges labeled by $(i,j)$ and belonging to $\mcp_{\pi,\bar{\pi}}$ is odd. Hence, the number of edges labeled by $(i,j)$ and belonging to $\mcp_{\pi,\varphi}$ ($\mcp_{\varphi,\bar{\pi}}$)  is odd (even) or vice versa. Thus, $\dsc_{i,j}(\pi,\varphi)=1$ ($\dsc_{i,j}(\varphi,\bar{\pi})=0$) or vice versa and $\dsc_{i,j}(\pi,\bar{\pi})=1$. Anyway,
\begin{equation}\label{e22}
 \dsc_{i,j}(\pi,\varphi)+\dsc_{i,j}(\varphi,\bar{\pi})=\dsc_{i,j}(\pi,\bar{\pi})
\end{equation}
 for every label $(i,j)$ of the graph $C$. Analogously,
\begin{equation}\label{e23}
 \dsc_{i,j}(\pi,\varphi)+\dsc_{i,j}(\pi,\bar{\varphi})=\dsc_{i,j}(\varphi,\bar{\varphi}).
\end{equation}
 Equalities~(\ref{e22}),~(\ref{e23}) and~(\ref{e11}) give
$$
d_W(\pi,\varphi)+ d_W(\varphi,\bar{\pi})= d_W(\pi,\bar{\pi}),
$$
$$
d_W(\pi,{\varphi})+ d_W(\pi,\bar{\varphi})= d_W(\varphi,\bar{\varphi}).
$$
By symmetric labeling of $C$ we have
$$
 \dsc(\pi,\varphi)=\dsc(\bar{\pi},\bar{\varphi}),\quad  \dsc(\varphi,\bar{\pi})=\dsc(\pi,\bar{\varphi}).
$$
 Hence, by~(\ref{e11})
 $$
 d_W(\pi,\varphi)=d_W(\bar{\pi},\bar{\varphi}),\quad  d_W(\varphi,\bar{\pi})=d_W(\pi,\bar{\varphi}).
 $$
 Thus, equalities~(\ref{e21}) are satisfied with
 $$
 x_1=\pi, \, x_2=\varphi, \, x_3=\bar{\pi}, \, x_4=\bar{\varphi}, \text{ and }
 $$
 $$
 s=d_W(\pi,\varphi)=d_W(\bar{\pi},\bar{\varphi}),\quad  t=d_W(\varphi,\bar{\pi})=d_W(\pi,\bar{\varphi}).
 $$
\end{proof}

In the case $k=1$ Proposition~\ref{p15} implies the following.
\begin{corollary}\label{c16}
Let $C$ be a simple cycle in $G_n$ having a symmetric labeling and let for every $\pi \in V(C)$ different edges of the path $\mcp_{\pi,\bar{\pi}}\subseteq C$ have different labels, where $\bar{\pi}$ is a vertex opposite to $\pi$ in $C$. Then for every different nonopposite vertices   $\pi,\varphi$ of $C$  the set $\{\pi,\varphi,\bar{\pi},\bar{\varphi}\}$ form a pseudolinear quadruple in $(S_n,d_W)$, where $\bar{\pi},\bar{\varphi}$ are opposite vertices to $\pi,\varphi$ in $C$, respectively.
\end{corollary}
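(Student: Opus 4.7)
The plan is to reduce Corollary~\ref{c16} to Proposition~\ref{p15} with $k=1$, since the latter specializes to requiring $|E_C(i,j)|=2(2\cdot 1-1)=2$ for every label $(i,j)$ of $C$. The task is therefore to show that the hypothesis of the corollary --- that for every $\pi\in V(C)$ the edges of the path $\mcp_{\pi,\bar{\pi}}$ carry pairwise distinct labels --- forces each label occurring in $C$ to appear on exactly two edges.

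First I would observe that any simple cycle $C$ in $G_n$ has even length, say $2m$, so for each vertex $\pi\in V(C)$ the two $\pi$--$\bar{\pi}$ arcs of $C$ partition $E(C)$ into two paths of $m$ edges each; one of these is $\mcp_{\pi,\bar{\pi}}$. By the symmetric labeling of $C$, the $\ell$-th edge of $\mcp_{\pi,\bar{\pi}}$ counted from $\pi$ shares its label with the $\ell$-th edge of the complementary arc counted from $\bar{\pi}$, so the multisets of labels along the two arcs agree.

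Next I would invoke the hypothesis: if the $m$ edges of $\mcp_{\pi,\bar{\pi}}$ all carry pairwise distinct labels, then by the matching from the previous step the same $m$ distinct labels appear on the complementary arc, each exactly once there as well. Hence every label occurring anywhere in $C$ occurs on exactly two edges of $C$, which is precisely the condition $|E_C(i,j)|=2$ required in the $k=1$ case of Proposition~\ref{p15}. Applying that proposition to $C$ with the same four vertices $\pi,\varphi,\bar{\pi},\bar{\varphi}$ delivers the pseudolinear quadruple.

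I expect the argument to be essentially bookkeeping; the only delicate point is lining up the opposite-edge pairing induced by the symmetric labeling with the two arc orientations, so that the hypothesis about one arc $\mcp_{\pi,\bar{\pi}}$ genuinely propagates to a global count on $C$. This is resolved by the explicit matching of the $\ell$-th edge from $\pi$ with the $\ell$-th edge from $\bar{\pi}$, and I do not anticipate a deeper obstacle.
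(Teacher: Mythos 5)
Your proposal is correct and follows exactly the route the paper takes: the paper states Corollary~\ref{c16} as the case $k=1$ of Proposition~\ref{p15}, and your verification that the distinct-labels hypothesis on one arc, combined with the symmetric labeling, forces $|E_C(i,j)|=2$ for every label is precisely the bookkeeping the paper leaves implicit.
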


\begin{remark}
The assertion converse to Corollary~\ref{c16} does not hold. Consider the permutations

$$\alpha=(1,2,3,4), \quad \beta=(4,1,2,3), \quad \gamma=(4,2,3,1), \quad \delta=(1,3,4,2).$$
We have
\begin{align*}
\dsc(\alpha,\beta)&=\{(1,2),(1,3),(1,4)\}=\dsc(\gamma,\delta),\\
\dsc(\beta,\gamma)&=\{(2,4),(3,4)\}=\dsc(\alpha,\delta),\\
\dsc(\alpha,\gamma)&=\{(1,2),(1,3),(1,4),(2,4),(3,4)\}=\dsc(\beta,\delta).
\end{align*}
This implies that $(\alpha,\beta,\gamma,\delta)$ is a pseudolinear quadruple in $(S_4,d_W)$. 	Let us show that this pseudolinear quadruple is not a part of a symmetric labeled cycle in $G_4$. Therefore, we show that there are no paths $P_{\alpha,\beta}$ from $\alpha$ to $\beta$ and $P_{\gamma,\delta}$ from $\gamma$ to $\delta$ in $G_4$ such that they have the same length and the same labeling. Denote by $l_a(\pi)$ the set of all labels of the edges adjacent to $\pi$. One can see from Figure~\ref{f1} that
\begin{align*}
l_a(\alpha)&=\{(1,2),(2,3),(3,4)\},\\
l_a(\gamma)&=\{(2,4),(2,3),(1,3)\}.
\end{align*}
Hence, $l_a(\alpha)\cap l_a(\gamma)=\{(2,3)\}$.
For the next point $\alpha_1$ on $P_{\alpha,\beta}$ and the next point $\gamma_1$ on $P_{\gamma,\delta}$ the labels must be equal, therefore $l(\{\alpha,\alpha_1\})=l(\{\gamma,\gamma_1\})=(\{2,3\})$. Consequently, $\alpha_1=(1,3,2,4)$, $\gamma_1=(4,3,2,1)$. Again
\begin{align*}
l_a(\alpha_1)&=\{(2,3)(1,4),(2,4)\},\\
l_a(\gamma_1)&=\{(2,3)(1,2),(3,4)\},
\end{align*}
and $l_a(\alpha_1)\cap l_a(\gamma_1)=\{(2,3)\}$.
Thus, there is no other way than backwards for the labels to be symmetric. In conclusion, there are no symmetrically labeled paths $P_{\alpha,\beta}, P_{\gamma,\delta}$ and the pseudolinear quadruple $(\alpha,\beta,\gamma,\delta)$ lies on no symmetric labeled cycle in $G_4$.
\end{remark}

\begin{example} Let us show an example of a cycle satisfying condition of Proposition~\ref{p15} with $k>1$.
Let $\pi_1=(1,2,3,4,5,6,7,8)\in S_8$ and let $C=(\pi_1,...,\pi_{12})$ such that
$$
\pi_1(1,2)\pi_2(3,4)\pi_3(1,2)\pi_4(5,6)\pi_5(1,2)\pi_6(7,8)\pi_7
$$
$$
\pi_7(1,2)\pi_{8}(3,4)\pi_{9}(1,2)\pi_{10}(5,6)\pi_{11}(1,2)\pi_{12}(7,8)\pi_1.
$$
%Here $\pi_1(1,2)\pi_2$ means that $\pi_2$ is obtained from $\pi_1$ by transposition of $1$ and $2$.
Here and below $\pi_k(i,j)\pi_l$ means that the permutation $\pi_l$ is obtained from $\pi_k$ by transposition $i$-th and $j$-th elements.
\end{example}

\begin{example} Let us show that a symmetric labeling of a cycle $C$ in $G_n$ is not sufficient for every four points $\{\pi,\varphi,\bar{\pi},\bar{\varphi}\}$ of this cycle forming a pseudolinear quadruple in $(S_n,d_W)$,
where $\bar{\pi},\bar{\varphi}$ are opposite vertices to $\pi,\varphi$ in $C$, respectively.
Indeed, let $\pi_1=(1,2,3,4,5,6)\in S_6$ and let $C=(\pi_1,...,\pi_{8})$ such that
$$
\pi_1(1,2)\pi_2(3,4)\pi_3(1,2)\pi_4(5,6)\pi_5$$
$$
\pi_5(1,2)\pi_{6}(3,4)\pi_{7}(1,2)\pi_{8}(5,6)\pi_{1}.
$$
Consider a quadruple of points $\{\pi_1,\pi_3,\pi_5,\pi_7\}$. For these points holds that $\dsc(\pi_1,\pi_3)=\{(1,2),(3,4)\}$,  $\dsc(\pi_3,\pi_5)=\{(1,2),(5,6)\}$ and $\dsc(\pi_1,\pi_5)=\{(3,4),(5,6)\}$. Using~(\ref{e11}) wee see that neither of the pairs $\{\pi_1,\pi_3\}$, $\{\pi_3,\pi_5\}$, $\{\pi_1,\pi_5\}$ can be a diametrical pair of the pseudolinear quadruple $\{\pi_1,\pi_3,\pi_5,\pi_7\}$.
\end{example}

By $L(\mcp)$ we denote below the set of all labels of the path $\mcp$ and by $\mcp_{\pi,\varphi}$ any of the the shortest paths between $\pi$ and $\varphi$ in $G_n$.
Let $I(W)$ be the set of all elements of the matrix  $W$ which lie above the main diagonal, i.e., $I(W)=\{w_{i,j}\}_{i<j}$.
\begin{proposition}\label{p19}
Let $(S_n,d_W)$ be a metric space, i.e., $w_{i,j}>0$ for all $i<j$, $W$ be a weight  such that for every two different subsets $S_1,S_2\subseteq I(W)$ the relation
\begin{equation}\label{e19}
\sum\limits_{r_i\in S_1}r_i\neq \sum\limits_{r_i\in S_2}r_i,
\end{equation}
holds and let $\pi,\varphi,\bar{\pi},\bar{\varphi}$ be pairwise different points in $S_n$. Then the following conditions are equivalent:
\begin{itemize}
\item[\emph{(i)}] The set $\{\pi,\varphi,\bar{\pi},\bar{\varphi}\}$ form a pseudolinear quadruple in $(S_n,d_W)$ with the diameter $d_W(\pi,\bar{\pi})=d_W(\varphi,\bar{\varphi})$.
\item[\emph{(ii)}] $L(\mcp_{\pi,\varphi})= L(\mcp_{\bar{\pi}, \bar{\varphi}}), \quad
  L(\mcp_{\varphi,\bar{\pi}}) =L(\mcp_{\bar{\varphi},\pi}), \quad L(\mcp_{\pi,\varphi}) \cap L(\mcp_{\varphi,\bar{\pi}})=\varnothing$.
\end{itemize}
\end{proposition}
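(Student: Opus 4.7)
The plan is to exploit the fact, visible in Corollary~\ref{c12} and the proof of Proposition~\ref{p13}, that for any vertices $\alpha,\beta\in V(G_n)$ the label set $L(\mcp_{\alpha,\beta})$ of any shortest path coincides with the discordance set $\dsc(\alpha,\beta)$. Under this identification, condition~(ii) is equivalent to the three set-theoretic relations
$$\dsc(\pi,\varphi)=\dsc(\bar{\pi},\bar{\varphi}),\quad \dsc(\varphi,\bar{\pi})=\dsc(\bar{\varphi},\pi),\quad \dsc(\pi,\varphi)\cap\dsc(\varphi,\bar{\pi})=\varnothing,$$
and the whole argument can be carried out at the level of $\dsc$-sets, where Proposition~\ref{p13} and Theorem~\ref{th3.4} are directly available.

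For the direction (ii)$\Rightarrow$(i), I would first read off from~(\ref{e11}) the numerical equalities $d_W(\pi,\varphi)=d_W(\bar{\pi},\bar{\varphi})=:s$ and $d_W(\varphi,\bar{\pi})=d_W(\pi,\bar{\varphi})=:t$ from the first two set equalities. Combined with the third, the equivalence (ii)$\Leftrightarrow$(i) of Proposition~\ref{p13} (with $\psi=\varphi$) yields the disjoint decomposition $\dsc(\pi,\bar{\pi})=\dsc(\pi,\varphi)\sqcup\dsc(\varphi,\bar{\pi})$, whence $d_W(\pi,\bar{\pi})=s+t$. For the matching identity $d_W(\varphi,\bar{\varphi})=s+t$, I would use $\dsc(\pi,\bar{\varphi})=\dsc(\varphi,\bar{\pi})$ (the second equality in~(ii) together with symmetry of $\dsc$) to rewrite the same disjointness as $\dsc(\varphi,\pi)\cap\dsc(\pi,\bar{\varphi})=\varnothing$ and apply Proposition~\ref{p13} a second time with $\pi$ playing the intermediate role. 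The enumeration $(x_1,x_2,x_3,x_4)=(\pi,\varphi,\bar{\pi},\bar{\varphi})$ then verifies~(\ref{e21}).

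For the direction (i)$\Rightarrow$(ii), the generic weight hypothesis~(\ref{e19}) does the heavy lifting. After dividing by $\sum w_{i,j}$ it implies that the assignment $D\mapsto \sum_{(i,j)\in D}w_{i,j}$ defined on subsets of the index-pair set is injective, so $d_W(\alpha,\beta)=d_W(\gamma,\delta)$ forces $\dsc(\alpha,\beta)=\dsc(\gamma,\delta)$. Applied to the pseudolinear equalities $d_W(\pi,\varphi)=d_W(\bar{\pi},\bar{\varphi})$ and $d_W(\varphi,\bar{\pi})=d_W(\bar{\varphi},\pi)$ this delivers the first two set equalities in~(ii) at once. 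The disjointness condition then follows because the pseudolinear additivity $d_W(\pi,\bar{\pi})=d_W(\pi,\varphi)+d_W(\varphi,\bar{\pi})$ says $\varphi$ lies metrically between $\pi$ and $\bar{\pi}$ with respect to $d_W$; Theorem~\ref{th3.4} transports this to $d_{G_n}$-betweenness, and the equivalence (iii)$\Leftrightarrow$(ii) in Proposition~\ref{p13} finishes the argument.

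The main obstacle I anticipate is the temptation, in the (ii)$\Rightarrow$(i) direction, to redo the additivity argument from scratch for $d_W(\varphi,\bar{\varphi})$ rather than noticing that it is already encoded in the single disjointness $\dsc(\pi,\varphi)\cap\dsc(\varphi,\bar{\pi})=\varnothing$ once it is rewritten via the symmetric equality of~(ii). Beyond that, the proof reduces to careful bookkeeping glued together by the injectivity of subset sums.
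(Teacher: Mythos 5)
Your proof is correct. For the substance it follows the paper's own route: the identification $L(\mcp_{\alpha,\beta})=\dsc(\alpha,\beta)$ is exactly what the paper uses implicitly, and in the direction (i)$\Rightarrow$(ii) you derive the two label-set equalities from the subset-sum injectivity forced by~(\ref{e19}), precisely as the paper does. The one place you genuinely diverge is the disjointness condition: the paper argues directly that a common label $l\in L(\mcp_{\pi,\varphi})\cap L(\mcp_{\varphi,\bar{\pi}})$ would appear twice on the compound path from $\pi$ to $\bar{\pi}$, hence $l\notin\dsc(\pi,\bar{\pi})$ while $l$ contributes positively to both summands, contradicting $d_W(\pi,\varphi)+d_W(\varphi,\bar{\pi})=d_W(\pi,\bar{\pi})$; you instead invoke Theorem~\ref{th3.4} to transport $d_W$-betweenness of $\varphi$ to $d_{G_n}$-betweenness and then apply the equivalence (iii)$\Leftrightarrow$(ii) of Proposition~\ref{p13}. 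Both are valid; the paper's version is self-contained and shows explicitly where positivity of the weights enters, while yours reuses proven machinery and is shorter on the page. In the converse direction you also supply more detail than the paper, which dismisses (ii)$\Rightarrow$(i) as ``almost evident'': your double application of Proposition~\ref{p13} (once with $\varphi$ intermediate between $\pi$ and $\bar{\pi}$, once with $\pi$ intermediate between $\varphi$ and $\bar{\varphi}$) is a clean way to make that assertion rigorous, and correctly yields both diametrical sums $s+t$.
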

\begin{proof}
The implication (ii)$\Rightarrow$(i) is almost evident for any $W$.

Let us prove the implication (i)$\Rightarrow$(ii) by contradiction. Without loss of generality suppose first that $L(\mcp_{\pi,\varphi})\neq L(\mcp_{\bar{\pi}, \bar{\varphi}})$. Hence, $\dsc(\pi,\varphi)\neq\dsc(\bar{\pi},\bar{\varphi})$. Using~(\ref{e11}) and ~(\ref{e19}) it follows that the equality $d_W(\pi,\varphi)=d_W(\bar{\pi},\bar{\varphi})$ is impossible.

Suppose that $L(\mcp_{\pi,\varphi}) \cap L(\mcp_{\varphi,\bar{\pi}})\neq\varnothing$ then there exists a label $l$ such that $l\in L(\mcp_{\pi,\varphi}) \cap L(\mcp_{\varphi,\bar{\pi}})$. Since $\mcp_{\pi,\varphi}\cup\mcp_{\varphi,\bar{\pi}}$ is a path connecting $\pi$ and $\bar{\pi}$ in $G_n$ and the label $l$ appears twice in this path,  we have that $l\notin \dsc(\pi,\bar{\pi})$. Clearly, $l\in \dsc(\pi,\varphi)$, $l\in \dsc(\varphi,\bar{\pi})$. Again, using only~(\ref{e11}) we see that the equality $d_W(\pi,\varphi)+d_W(\varphi,\bar{\pi})=d_W(\pi,\bar{\pi})$ is impossible.
\end{proof}

\begin{remark}
There is a simple combinatorial description of all faces of a permutohedron of order $n$: its $k$-faces correspond to ordered partitions of the set $\{1,...,n\}$ into $n-k$ nonempty parts~\cite{Zi95}.
Proposition~\ref{p14} describes pseudolinear quadruples with the diameter $1$.
It is possible to show that every subset of $G_n$ formed from the vertices of $k$-faces, $k\geqslant 2$, contains a cycle $C$, satisfying conditions of Corollary~\ref{c16}.
In other words every $k$-face $2 \leqslant k\leqslant n-2$, contains pseudolinear quadruples in $X$ with diameter strictly less than $\diam X=1$.
\end{remark}

\begin{figure}[ht]
\begin{center}
\begin{minipage}[h]{0.45\linewidth}
\includegraphics[width=1\linewidth]{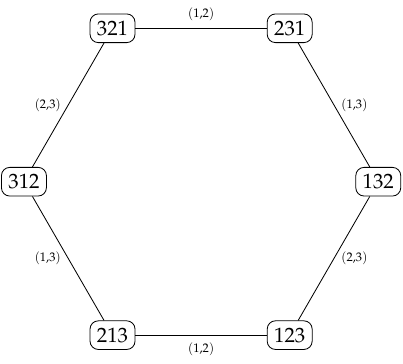}
\caption{The labeled graph $G_3$.}
\label{f2}
\end{minipage}
\hfill
\begin{minipage}[h]{0.45\linewidth}
\includegraphics[width=1\linewidth]{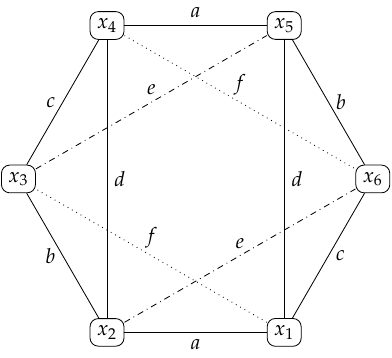}
\caption{The metric space $(X,d)$ with $X=\{x_1,...,x_6\}$.}
\label{f3}
\end{minipage}
\end{center}
\end{figure}

\begin{conjecture}\label{c4.9}
Let $(X,d)$ be a finite metric space such that the following conditions hold:
\begin{itemize}
 \item [(i)] $|X|=n!$;
 \item [(ii)] For every $x \in X$ there is a unique $\bar x\in X$ such that $d(x,\bar x)= \diam X$;
  \item [(iii)] For every two non-diametrical points $x,y$ the set $\{x,y,\bar x, \bar y\}$ form a pseudolinear quadruple;
  \item [(iv)] For every two different points $x,y\in X$ there exists $z\in X$ and a sequence of points $z=p_0,p_1,...,p_k=\bar z$ such that $x,y \in \{p_0,p_1,...,p_k\}$  and for every  $0\leqslant i<j \leqslant k$ the equality
      $$d(p_i,p_j)=d(p_i,p_{i+1})+\cdots+d(p_{j-1},p_{j})$$
      holds, where $k=\binom{n}{2}$. For $k>\binom{n}{2}$ such sequences do not exist.
\end{itemize}
Then $(X,d)$ is isometric to $(S_n,d_W)$ for some weight $W$.
\end{conjecture}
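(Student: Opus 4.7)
The plan is to construct, from $(X,d)$, a bijection $\Phi\colon X \to S_n$ and a weighting matrix $W$ such that $d(x,y)=d_W(\Phi(x),\Phi(y))$ for all $x,y\in X$. The guiding intuition is that conditions (i)--(iv) encode enough combinatorial data to reconstruct the edge-graph $G_n$ of the permutohedron, its canonical edge-labeling by pairs $(i,j)$, and the weights $w_{i,j}$. I would proceed in three stages: (a) recover the graph $G_n$ up to isomorphism; (b) recover the edge labeling; (c) recover the weights and verify the isometry.

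For stage (a), fix a basepoint $x_0\in X$ which will correspond to the identity permutation, so that $\bar{x_0}$ corresponds to its ordinal inverse. Condition~(iv) supplies ``saturated chains'' of length exactly $\binom{n}{2}$ through every pair of points, matching the diameter of $G_n$ computed in Proposition~\ref{p01}. I would define atomic edges of $X$ to be pairs $\{x,y\}$ that occur consecutively on some such chain, producing an auxiliary graph $H(X)$ on $n!$ vertices. By conditions~(ii) and~(iii), the antipodal map $x\mapsto\bar x$ is an involutive ``central symmetry'' that transports every saturated chain through $x_0$ to a parallel chain through $\bar{x_0}$, and more generally pairs up parallel atomic edges across antipodal pseudolinear quadruples. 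To conclude that $H(X)\cong G_n$, I would argue inductively on $n$: restricting attention to the subset of $X$ reachable from $x_0$ by atomic edges whose saturated-chain extensions avoid a chosen ``simple reflection'' should yield a metric space satisfying the same axioms with parameter $n-1$, and gluing $n$ such copies via the remark preceding the conjecture recovers the full permutohedral combinatorics.

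For stage (b), declare two atomic edges $e,e'$ equivalent if they appear as opposite sides of some pseudolinear quadruple $\{x,y,\bar x,\bar y\}$ and sit on a common hexagonal or square face of $H(X)$ detected inductively. The number of equivalence classes should equal $\binom{n}{2}$; choosing any bijection between the $n-1$ classes incident to $x_0$ and the transpositions $(i,i{+}1)$, the remaining labels propagate uniquely by the braid and commutation relations realized by hexagonal and square faces. For stage (c), set $w_{i,j}:=d(x,y)$ for any atomic edge $\{x,y\}$ of class $(i,j)$; well-definedness is forced by condition~(iii). The equality $d=d_W$ then follows from additivity of $d$ along any saturated chain (condition~(iv)) together with Proposition~\ref{p13} and Theorem~\ref{th3.4}: the labels accumulated along any saturated chain from $\Phi(x)$ to $\Phi(y)$ are exactly the elements of $\dsc(\Phi(x),\Phi(y))$.

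The main obstacle is stage (a): extracting the full Cayley-graph structure of $S_n$ from the four metric axioms. Condition~(iv) pins down the length of saturated chains but does not directly impose the braid relations; one must recover these from the interplay of (ii), (iii), and the uniqueness built into (iv). For $n=3$, $G_3$ is a single hexagon and the axioms force the six points to sit on an antipodally symmetric cycle with prescribed distances, which can be verified by direct case analysis. For general $n$ one needs to control the full facial structure of the permutohedron, and I expect the inductive step --- identifying $(n-1)$-dimensional ``sub-permutohedra'' as metrically distinguished subsets of $X$ --- to be the most delicate point.
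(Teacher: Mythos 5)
There is a genuine gap, and you name it yourself: stage (a) is not a proof step but the entire open content of the statement. Note first that this statement is a \emph{conjecture} in the paper; the authors prove it only for $n=3$, by a direct six-point case analysis (using (ii) and (iii) to force the antipodally symmetric distance pattern of Figure~\ref{f3}, then using (iv) with $k=\binom{3}{2}=3$ to enumerate the possible four-point additive chains $a_1),\dots,a_6)$ between a diametrical pair, discarding the two containing consecutive diametrical points, and exhibiting an explicit isometry $\Phi$ and weight matrix $W$ in each surviving case). Your proposal does not carry out even this: you assert that for $n=3$ ``the axioms force the six points to sit on an antipodally symmetric cycle\dots which can be verified by direct case analysis,'' but the verification is exactly what a proof would have to contain. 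For general $n$, your plan to recover $H(X)\cong G_n$ rests on unestablished claims at every step: that the ``atomic edges'' extracted from condition (iv) form a graph of the right valence; that the antipodal involution from (ii)--(iii) transports saturated chains to ``parallel'' chains (parallelism is a notion you have not defined metrically); and that deleting one label class yields a sub-space satisfying the same axioms with parameter $n-1$ --- this last is doubtful as stated, since the facets of a permutohedron of order $n$ are products of lower-order permutohedra with $k!(n-k)!$ vertices, not single permutohedra with $(n-1)!$ vertices, so the inductive hypothesis does not even apply to them in the form you invoke.

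Two further specific problems. First, stage (c) is circular in spirit: you appeal to Proposition~\ref{p13} and Theorem~\ref{th3.4} to identify the labels accumulated along a saturated chain with the discordance set, but those results live in $(S_n,d_W)$ and can only be used \emph{after} the bijection $\Phi$ and the labeling exist; for the abstract space $(X,d)$, condition (iv) gives metric additivity along chains, and the equivalence of metric betweenness with graph betweenness is precisely what must be re-derived, not quoted. Second, the well-definedness of $w_{i,j}$ in stage (c) is not ``forced by condition (iii)'': condition (iii) equates the lengths of \emph{opposite} sides of a single pseudolinear quadruple, whereas your label classes are closed under chains of quadruples sharing square or hexagonal faces, so equality of $d$ across a whole class requires the facial structure of stage (a) --- again the unproven part. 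In short, your text is a reasonable research program for attacking the conjecture, and its stage-(a) bottleneck correctly identifies why the general statement remains open, but it is not a proof of anything, and in particular it does not reproduce the one case ($n=3$) that the paper actually settles.
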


Clearly, for every metric space $(S_n,d_W)$ conditions (i)--(iv) hold. Thus this conjecture asserts that these conditions completely define the structure of $(S_n,d_W)$ up to the weight $W$.

\begin{proof}[for the case n=3]
Let $(X,d)$ be a metric space satisfying conditions (i)--(iv). And let $X=\{x_1,x_2,x_3,x_4,x_5,x_6\}$. By condition (ii), without loss of generality, consider that
$$
d(x_1,x_4)=d(x_2,x_5)=d(x_3,x_6)=\diam X.
$$
It follows from (iii) that
$$
d(x_2,x_4)=d(x_1,x_5)=d, \quad d(x_2,x_6)=d(x_3,x_5)=e,
$$
$$
d(x_1,x_3)=d(x_4,x_6)=f, \quad d(x_1,x_2)=d(x_4,x_5)=a,
$$
$$
d(x_2,x_3)=d(x_5,x_6)=b, \quad d(x_3,x_4)=d(x_6,x_1)=c,
$$
and
\begin{equation}\label{e410}
\diam X=a+d=b+e=c+f,
\end{equation}
see Figure~\ref{f3}.
Clearly, from (iv) for diametrical $x$ and $y$ it follows that $z=x$ and $\bar{z}=y$.
Consider the set of all possible sequences of points connecting the diametrical points $x_1$ and $x_4$ and consisting of  $\binom{3}{2}+1=4$ points:
$$
a_1)\,  x_1,x_6,x_5,x_4;\quad a_2)\, x_1,x_6,x_2,x_4;\quad a_3)\, x_1,x_6,x_3,x_4;
$$
$$
a_4)\, x_1,x_5,x_2,x_4;\quad a_5)\, x_1,x_5,x_3,x_4;\quad a_6)\, x_1,x_5,x_6,x_4.
$$
Without loss of generality, the symmetric case where the points $x_2$ or $x_3$ are at the second place is omitted. Sequences $a_3)$ and $a_4)$ can not satisfy (iv) since they contain consecutive diametrical pairs of points.
Let us consider the sequence $a_1)$. Suppose condition (iv) holds for this case, i.e.,
$$
a+b+c=\diam X.
$$
By~(\ref{e410}) we have
$$
b+c=d,\quad a+c=e, \quad a+b=f.
$$
In this case $(X,d)$ is well-defined, i.e., all triangle inequalities are satisfied.
Using~(\ref{e11}) and Figure~\ref{f2} we see that $(X,d)$ is isometric to $(S_3,d_W)$ with the isometry $\Phi\colon X \to S_3$:
$$
\Phi(x_1)=(1,2,3),\quad\Phi(x_2)=(2,1,3),\quad\Phi(x_3)=(3,1,2),
$$
$$
\Phi(x_4)=(3,2,1),\quad\Phi(x_5)=(2,3,1),\quad\Phi(x_6)=(1,3,2),
$$
and the weight
$$
W=\left(
    \begin{array}{ccc}
      0 & a & b \\
      0 & 0 & c \\
      0 & 0 & 0 \\
    \end{array}
  \right).
$$
Note that this isometry is not necessarily unique.

Consider case $a_2)$. Again, suppose that condition (iv) holds, i.e.,
$$
c+d+e=\diam X.
$$
Using~(\ref{e410}) we have
$$
c+e=a,\quad c+d=b, \quad d+e=f.
$$
In this case $(X,d)$ is isometric to $(S_3,d_W)$, for example, with the isometry \mbox{$\Phi\colon X \to S_3$}:
$$
\Phi(x_1)=(1,2,3),\quad\Phi(x_6)=(1,3,2),\quad\Phi(x_2)=(2,3,1),
$$
$$
\Phi(x_4)=(3,2,1),\quad\Phi(x_3)=(3,1,2),\quad\Phi(x_5)=(2,1,3),
$$
and the weight
$$
W=\left(
    \begin{array}{ccc}
      0 & d & e \\
      0 & 0 & c \\
      0 & 0 & 0 \\
    \end{array}
  \right).
$$
Cases $a_5)$, $a_6)$ are analogous.

Since by the statement of conjecture condition (iv) holds for $(X,d)$, it holds at least for one of the cases $a_1)$, $a_2)$, $a_5)$, $a_6)$ or for their respective symmetric cases which were omitted from consideration. The existence of isometry in each case is shown.
\end{proof}

\section{Conclusion}
Usually, the concept of a metric is associated with the distance between points of a certain space. But in mathematics there are a lot of metrics defined not on points but on completely different mathematical objects. A large number of distances is collected in~\cite{DD16}. Among these distances one can distinguish distances on graphs, matrices, strings, etc. In this work we have considered a metric space the points of which are permutations $S_n$ of the numbers $1,...,n$ with fixed $n$. The introduced metric $d_W$ generalizes not only the well-known Kendall $\tau$ metric but also some another its weighted generalizations.

The paper is devoted to the study of  geometric properties of the space $(S_n,d_W)$.
It is proved that $(S_n,d_W)$ in general case is a pseudometric space and is a metric space if and only if all weights in the strictly upper triangular matrix $W$ are positive. Some basic geometric properties of this space are also described. The observation that the vertex set of a permutohedron of order $n$ coincides with the set of points of the space $(S_n,d_W)$ allowed us to see that the edge-graph $G_n$ of such permutohedron can be used as a convenient tool for studying the space $(S_n,d_W)$. Using the graph $G_n$ we give a criterion which guarantees that some point ``lies between'' another two fixed points from $(S_n,d_W)$ and describe special type four-point subsets of $(S_n,d_W)$ so called ``pseudolinear quadruples''. At the end we formulate a conjecture that characterizes the metric space $(S_n,d_W)$ and prove it for the case $n=3$.

\section{Acknowledgement}
The authors thank the anonymous referees for their remarks which considerably improved this article.

\end{document}